\documentclass[oneside,english,reqno]{amsart}
\usepackage[T1]{fontenc}
\usepackage[latin9]{inputenc}
\setcounter{secnumdepth}{2}
\setcounter{tocdepth}{2}
\usepackage{babel}
\usepackage{mathrsfs}
\usepackage{mathtools}
\usepackage{enumitem}
\usepackage{amstext}
\usepackage{amsthm}
\usepackage{amssymb}
\usepackage{graphicx}
\usepackage[unicode=true,pdfusetitle,
 bookmarks=true,bookmarksnumbered=false,bookmarksopen=false,
 breaklinks=false,pdfborder={0 0 1},backref=false,colorlinks=false]
 {hyperref}
\hypersetup{
 colorlinks=true,citecolor=blue,linkcolor=blue,linktocpage=true}

\makeatletter
\numberwithin{equation}{section}
\numberwithin{figure}{section}
\numberwithin{table}{section}
\theoremstyle{plain}
\newtheorem{thm}{\protect\theoremname}[section]
  \theoremstyle{definition}
  \newtheorem{defn}[thm]{\protect\definitionname}
  \theoremstyle{plain}
  \newtheorem{lem}[thm]{\protect\lemmaname}
  \theoremstyle{plain}
  \newtheorem{cor}[thm]{\protect\corollaryname}
  \theoremstyle{remark}
  \newtheorem{rem}[thm]{\protect\remarkname}
  \theoremstyle{plain}
  \newtheorem{prop}[thm]{\protect\propositionname}
  \theoremstyle{plain}
  \newtheorem*{lem*}{\protect\lemmaname}
  \theoremstyle{remark}
  \newtheorem*{note*}{\protect\notename}
  \theoremstyle{remark}
  \newtheorem*{acknowledgement*}{\protect\acknowledgementname}

\@ifundefined{date}{}{\date{}}
\allowdisplaybreaks
\usepackage{needspace}
\usepackage{refstyle}
\usepackage{enumitem}
\usepackage[skins,breakable]{tcolorbox}

\newref{lem}{refcmd={Lemma \ref{#1}}}
\newref{thm}{refcmd={Theorem \ref{#1}}}
\newref{cor}{refcmd={Corollary \ref{#1}}}
\newref{sec}{refcmd={Section \ref{#1}}}
\newref{sub}{refcmd={Section \ref{#1}}}
\newref{subsec}{refcmd={Section \ref{#1}}}
\newref{chap}{refcmd={Chapter \ref{#1}}}
\newref{prop}{refcmd={Proposition \ref{#1}}}
\newref{exa}{refcmd={Example \ref{#1}}}
\newref{tab}{refcmd={Table \ref{#1}}}
\newref{rem}{refcmd={Remark \ref{#1}}}
\newref{def}{refcmd={Definition \ref{#1}}}
\newref{fig}{refcmd={Figure \ref{#1}}}

\setlist[enumerate]{itemsep=5pt,topsep=3pt}
\setlist[enumerate,1]{label=(\roman*),ref=\roman*}
\setlist[enumerate,2]{label=(\alph*),ref=\theenumi \alph*}

\AtBeginDocument{
  
}

\makeatother

  \providecommand{\acknowledgementname}{Acknowledgement}
  \providecommand{\corollaryname}{Corollary}
  \providecommand{\definitionname}{Definition}
  \providecommand{\lemmaname}{Lemma}
  \providecommand{\notename}{Note}
  \providecommand{\propositionname}{Proposition}
  \providecommand{\remarkname}{Remark}
\providecommand{\theoremname}{Theorem}

\begin{document}

\title{Positive definite (p.d.) functions vs p.d. distributions}

\author{Palle Jorgensen}

\address{(Palle E.T. Jorgensen) Department of Mathematics, The University
of Iowa, Iowa City, IA 52242-1419, U.S.A. }

\email{palle-jorgensen@uiowa.edu}

\urladdr{http://www.math.uiowa.edu/\textasciitilde{}jorgen/}

\author{Feng Tian}

\address{(Feng Tian) Department of Mathematics, Hampton University, Hampton,
VA 23668, U.S.A.}

\email{feng.tian@hamptonu.edu}

\subjclass[2000]{Primary 47L60, 46N30, 46N50, 42C15, 65R10; Secondary 46N20, 22E70,
31A15, 58J65, 81S25.}

\keywords{Hilbert space, reproducing kernels, boundary values, unitary one-parameter
group, convex, extreme-points, harmonic decompositions, stationary-increment
stochastic processes, representations of Lie groups, renormalization,
Green\textquoteright s function.}
\begin{abstract}
We give explicit transforms for Hilbert spaces associated with positive
definite functions on $\mathbb{R}$, and positive definite tempered
distributions, incl., generalizations to non-abelian locally compact
groups. Applications to the theory of extensions of p.d. functions/distributions
are included. We obtain explicit representation formulas for positive
definite tempered distributions in the sense of L. Schwartz, and we
give applications to Dirac combs and to diffraction. As further applications,
we give parallels between Bochner's theorem (for continuous p.d. functions)
on the one hand, and the generalization to Bochner/Schwartz representations
for positive definite tempered distributions on the other; in the
latter case, via tempered positive measures. Via our transforms, we
make precise the respective reproducing kernel Hilbert spaces (RKHSs),
that of N. Aronszajn and that of L. Schwartz. Further applications
are given to stationary-increment Gaussian processes. 

\tableofcontents{}
\end{abstract}

\maketitle
The study of positive definite (p.d.) functions and p.d. kernels is
motivated by diverse themes in analysis and operator theory, in white
noise analysis, applications of reproducing kernel (RKHS) theory,
extensions by Laurent Schwartz, and in reflection positivity from
quantum physics. Below we make more precise some parallels between,
on the one hand, the standard case from Case 1, of continuous positive
definite functions $f$ on $\mathbb{R}$, the setting of Bochner's
theorem, including generalizations to non-abelian locally compact
groups. We shall also discuss the theory of extensions of p.d. functions.
In part two of the paper we obtain representation formulas for positive
definite tempered distributions in the sense of L. Schwartz \cite{MR0185423,MR0179587}.
The parallels between Bochner's theorem (for continuous p.d. functions),
and the generalization to Bochner/Schwartz representations for positive
definite tempered distributions will be made clear. In the first case,
we have the Bochner representation via finite positive measures $\mu$;
and in the second case, instead via tempered positive measures. This
parallel also helps make precise the respective reproducing kernel
Hilbert spaces (RKHSs). This further leads to a more unified approach
to the treatment of the stationary-increment Gaussian processes \cite{MR2793121,MR2966130,MR3402823}.
A key argument will rely on the existence of a unitary representation
$U$ of $\left(\mathbb{R},+\right)$, acting on the particular RKHS
under discussion. In fact, the same idea (with suitable modifications)
will also work in the wider context of locally compact groups. In
the abelian case, we shall make use of the Stone representation for
$U$ in the form of orthogonal projection valued measures; and in
more general settings, the Stone-Naimark-Ambrose-Godement (SNAG) representation
\cite{MR1503079}. 

\section{Preliminaries}

In our theorems and proofs, we shall make use the particular reproducing
kernel Hilbert spaces (RKHSs) which allow us to give explicit formulas
for our solutions. The general framework of RKHSs were pioneered by
Aronszajn in the 1950s \cite{MR0051437}; and subsequently they have
been used in a host of applications; e.g., \cite{MR2327597,MR2558684}.

\subsubsection{The RKHS $\mathscr{H}_{f}$}

For simplicity we focus on the case $G=\mathbb{R}$. 
\begin{defn}
\label{def:pdf}Let $\Omega$ be an open domain in $\mathbb{R}$.
A function $f:\Omega-\Omega\rightarrow\mathbb{C}$ is \emph{positive
definite } if 
\begin{equation}
\sum\nolimits _{i}\sum\nolimits _{j}c_{i}\overline{c}_{j}f\left(x_{i}-x_{j}\right)\geq0\label{eq:def-pd}
\end{equation}
for all finite sums with $c_{i}\in\mathbb{C}$, and all $x_{i}\in\Omega$.
We assume that all the p.d. functions are continuous and bounded.
\end{defn}
\begin{lem}[Two equivalent conditions for p.d.]
If $f$ is given continuous on $\mathbb{R}$, we have the following
two equivalent conditions for the positive definite property:
\begin{enumerate}
\item $\forall n\in\mathbb{N}$, $\forall\left\{ x_{i}\right\} _{1}^{n}$,
$\forall\left\{ c_{i}\right\} _{1}^{n}$, $x\in\mathbb{R}$, $c_{i}\in\mathbb{C}$,
\[
\sum_{i}\sum_{j}c_{i}\overline{c}_{j}f\left(x_{i}-x_{j}\right)\geq0;
\]
\item $\forall\varphi\in C_{c}\left(\mathbb{R}\right)$, we have: 
\[
\int_{\mathbb{R}}\int_{\mathbb{R}}\varphi\left(x\right)\overline{\varphi\left(y\right)}f\left(x-y\right)dxdy\geq0.
\]
\end{enumerate}
\end{lem}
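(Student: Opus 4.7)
The plan is to prove the two implications separately, using Riemann-sum approximations in one direction and mollifiers in the other. Continuity of $f$ is the key hypothesis that makes both approximation arguments work.

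For $(1)\Rightarrow(2)$, I would fix $\varphi\in C_{c}(\mathbb{R})$ and approximate the double integral by Riemann sums. Since $\varphi$ has compact support, I can choose a partition $\{x_{i}\}$ of a large interval containing $\mathrm{supp}(\varphi)$ with mesh size $\Delta x$, and set $c_{i}:=\varphi(x_{i})\,\Delta x$. The Riemann sum
\[
\sum_{i}\sum_{j}c_{i}\overline{c_{j}}\,f(x_{i}-x_{j})=\sum_{i}\sum_{j}\varphi(x_{i})\overline{\varphi(x_{j})}f(x_{i}-x_{j})\,\Delta x\,\Delta x
\]
is nonnegative by hypothesis (1). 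Because $(x,y)\mapsto\varphi(x)\overline{\varphi(y)}f(x-y)$ is continuous and compactly supported, the Riemann sums converge to $\int\!\int\varphi(x)\overline{\varphi(y)}f(x-y)\,dx\,dy$, which is therefore $\geq 0$.

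For $(2)\Rightarrow(1)$, I would use an approximate identity to replace a finite point-mass combination by a smooth compactly supported function. Let $\psi\in C_{c}(\mathbb{R})$ be nonnegative with $\int\psi=1$, and set $\psi_{\varepsilon}(x):=\varepsilon^{-1}\psi(x/\varepsilon)$. Given $\{x_{i}\}_{1}^{n}\subset\mathbb{R}$ and $\{c_{i}\}_{1}^{n}\subset\mathbb{C}$, define
\[
\varphi_{\varepsilon}(x):=\sum_{i=1}^{n}c_{i}\,\psi_{\varepsilon}(x-x_{i})\in C_{c}(\mathbb{R}).
\]
Hypothesis (2) gives $\int\!\int\varphi_{\varepsilon}(x)\overline{\varphi_{\varepsilon}(y)}f(x-y)\,dx\,dy\geq 0$. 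Expanding the sum and changing variables, this integral equals
\[
\sum_{i,j}c_{i}\overline{c_{j}}\int\!\int\psi_{\varepsilon}(u)\overline{\psi_{\varepsilon}(v)}\,f\!\left((x_{i}-x_{j})+(u-v)\right)du\,dv.
\]
By uniform continuity of $f$ on a neighborhood of each $x_{i}-x_{j}$, the inner integral converges to $f(x_{i}-x_{j})$ as $\varepsilon\to 0^{+}$. Passing to the limit yields $\sum_{i,j}c_{i}\overline{c_{j}}f(x_{i}-x_{j})\geq 0$, which is (1).

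The only delicate point is ensuring the passage to the limit is rigorous: in the first direction, uniform continuity of the integrand on its compact support guarantees Riemann-sum convergence; in the second direction, one needs $f$ to be continuous (and locally bounded) so that convolution against the approximate identity converges pointwise in $u-v$ to the value $f(x_{i}-x_{j})$, and one needs dominated convergence (justified since $\psi_{\varepsilon}$ is supported in a fixed compact set for $\varepsilon\leq 1$ and $f$ is bounded on compact sets). Neither step poses a genuine obstacle; the lemma is essentially a standard density/approximation argument built on continuity of $f$.
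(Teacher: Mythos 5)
Your proof is correct and follows essentially the same route as the paper, which simply sketches ``use Riemann integral approximation'' for $(1)\Rightarrow(2)$ and the mollifier/approximate-identity limit (the $\varphi_{n,x}\to\delta_{x}$ construction of Lemma \ref{lem:dense}) for $(2)\Rightarrow(1)$. Your version just writes out the details the paper leaves implicit; no gap.
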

\begin{proof}
Use Riemann integral approximation, and note that $f\left(\cdot-x\right)\in\mathscr{H}_{f}$
and $\varphi\ast f\in\mathscr{H}_{f}$. (See details below.)
\end{proof}

Consider a continuous positive definite function so $f$ is defined
on $\Omega-\Omega$. Set 
\begin{equation}
f_{y}\left(x\right):=f\left(x-y\right),\;\forall x,y\in\Omega.\label{eq:H0}
\end{equation}
Let $\mathscr{H}_{f}$ be the \emph{reproducing kernel Hilbert space
}(RKHS), which is the completion of 
\begin{equation}
\left\{ \sum\nolimits _{\text{finite}}c_{j}f_{x_{j}}\mid x_{j}\in\Omega,\:c_{j}\in\mathbb{C}\right\} \label{eq:H1}
\end{equation}
with respect to the inner product
\begin{equation}
\left\langle \sum\nolimits _{i}c_{i}f_{x_{i}},\sum\nolimits _{j}d_{j}f_{y_{j}}\right\rangle _{\mathscr{H}_{f}}:=\sum\nolimits _{i}\sum\nolimits _{j}c_{i}\overline{d}_{j}f\left(x_{i}-y_{j}\right);\label{eq:ip-discrete}
\end{equation}
modulo the subspace of functions of $\left\Vert \cdot\right\Vert _{\mathscr{H}_{f}}$-norm
zero. 

Below, we introduce an equivalent characterization of the RKHS $\mathscr{H}_{f}$,
which we will be working with in the rest of the paper. 
\begin{lem}
\label{lem:dense}Fix $\Omega=(0,\alpha)$. Let $\varphi_{n,x}\left(t\right)=n\varphi\left(n\left(t-x\right)\right)$,
for all $t\in\Omega$; where $\varphi$ satisfies 

\begin{enumerate}
\item $\mathrm{supp}\left(\varphi\right)\subset\left(-\alpha,\alpha\right)$;
\item $\varphi\in C_{c}^{\infty}$, $\varphi\geq0$;
\item $\int\varphi\left(t\right)dt=1$. Note that $\lim_{n\rightarrow\infty}\varphi_{n,x}=\delta_{x}$,
the Dirac measure at $x$.
\end{enumerate}
\end{lem}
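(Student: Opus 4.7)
The statement of \lemref{dense} (as suggested by the label and the flanking sentence about an ``equivalent characterization of $\mathscr{H}_{f}$'') should be read as asserting that the mollified translates $\varphi_{n,x}*f$ lie in $\mathscr{H}_{f}$ and that $\varphi_{n,x}*f\to f_{x}$ in the RKHS norm as $n\to\infty$, so in particular the span of $\{\varphi_{n,x}*f:n\in\mathbb{N},\,x\in\Omega\}$ is dense in $\mathscr{H}_{f}$. My plan is to prove exactly this.

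The first step is to interpret $\varphi_{n,x}*f$ as an element of $\mathscr{H}_{f}$. Because $\mathrm{supp}(\varphi_{n,x})\subset(x-\alpha/n,x+\alpha/n)$, for $n$ large enough the convolution only samples $f_{y}$ at points $y\in\Omega$, and by the definition (\ref{eq:H1}) each $f_{y}\in\mathscr{H}_{f}$. I would then realize $\varphi_{n,x}*f$ as the Bochner-type integral $\int\varphi_{n,x}(y)\,f_{y}\,dy$; approximating this integral by Riemann sums $\sum_{k}\varphi_{n,x}(y_{k})f_{y_{k}}\Delta y_{k}$, which lie in the pre-Hilbert space (\ref{eq:H1}), and checking Cauchy-ness in $\mathscr{H}_{f}$ via the explicit inner product (\ref{eq:ip-discrete}), gives a well-defined vector in $\mathscr{H}_{f}$ whose reproducing-kernel inner products with $f_{z}$ equal $(\varphi_{n,x}*f)(z)$.

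With that in hand, the convergence $\varphi_{n,x}*f\to f_{x}$ in $\mathscr{H}_{f}$ reduces to a direct expansion of $\|\varphi_{n,x}*f-f_{x}\|_{\mathscr{H}_{f}}^{2}$. Using (\ref{eq:ip-discrete}) applied to Riemann sums and passing to the limit, I obtain
\begin{equation*}
\|\varphi_{n,x}*f-f_{x}\|_{\mathscr{H}_{f}}^{2}=\iint\varphi_{n,x}(s)\overline{\varphi_{n,x}(t)}\,f(s-t)\,ds\,dt-2\,\mathrm{Re}\!\int\varphi_{n,x}(s)\,f(s-x)\,ds+f(0).
\end{equation*}
Since $\varphi_{n,x}\to\delta_{x}$ as a measure and $f$ is continuous and bounded, the first term tends to $f(0)$ (by applying the approximate-identity property in both variables) and the second tends to $2f(0)$, so the whole expression tends to $0$. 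Density then follows because $\{f_{y}:y\in\Omega\}$ spans a dense subspace of $\mathscr{H}_{f}$ by definition (\ref{eq:H1}).

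The main technical point, and the only place I expect any friction, is the first step: giving a clean meaning to $\varphi_{n,x}*f$ as a genuine element of $\mathscr{H}_{f}$ (not merely a pointwise-defined function on $\Omega$) and justifying the exchange of integration with the $\mathscr{H}_{f}$ inner product. Once that is pinned down, the rest is an approximate-identity computation using only continuity and boundedness of $f$, both of which were assumed in \defref{pdf}.
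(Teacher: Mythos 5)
Your reading of the statement is the right one, and your argument is correct; but note that the paper itself supplies no proof of Lemma \ref{lem:dense}. As printed, the lemma has no conclusion at all (it only introduces the mollifier family $\varphi_{n,x}$ and records $\varphi_{n,x}\to\delta_{x}$), and the substantive claim you prove --- that $\Vert f_{\varphi_{n,x}}-f(\cdot-x)\Vert_{\mathscr{H}_{f}}\to0$, whence $\{f_{\varphi}\}_{\varphi\in C_{c}^{\infty}(\Omega)}$ is dense --- appears only as (\ref{eq:approx}) inside the proof of Lemma \ref{lem:RKHS-def-by-integral}, where it is justified by a circular citation of that same lemma and a pointer to \cite{MR863534,MR874059}. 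So your approximate-identity computation is not an alternative route; it is the standard argument that the paper delegates to the literature, and it checks out: the three terms in your expansion converge to $f(0)$, $-2f(0)$ and $f(0)$ respectively, using continuity of $f$ at $0$, the normalization $\int\varphi_{n,x}=1$, and the fact that $f(0)$ is real and nonnegative for a p.d.\ function. Two details are worth pinning down explicitly. First, you must take $n$ large enough (depending on $x$) that $\mathrm{supp}(\varphi_{n,x})\subset\Omega=(0,\alpha)$, since the generators $f_{\varphi}$ of Lemma \ref{lem:RKHS-def-by-integral} require $\varphi\in C_{c}^{\infty}(\Omega)$ and $f$ is only defined on $\Omega-\Omega$. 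Second, the ``technical point'' you flag --- identifying the completion element $f_{\varphi}$ with the vector-valued integral $\int\varphi(y)f_{y}\,dy$ so that $\langle f_{\varphi},f_{z}\rangle_{\mathscr{H}_{f}}=f_{\varphi}(z)$ --- is precisely what reconciles the two presentations (\ref{eq:H1})--(\ref{eq:ip-discrete}) and (\ref{eq:H2})--(\ref{eq:hi2}) of $\mathscr{H}_{f}$; your Riemann-sum argument handles it, with Cauchy-ness in $\Vert\cdot\Vert_{\mathscr{H}_{f}}$ following from uniform continuity of $f$ on the compact set $\mathrm{supp}(\varphi)-\mathrm{supp}(\varphi)$. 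With those two points made explicit, your proof is complete and fills a genuine gap in the exposition.
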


\begin{lem}
\label{lem:RKHS-def-by-integral}The RKHS, $\mathscr{H}_{f}$, is
the Hilbert completion of the functions 
\begin{equation}
f_{\varphi}\left(x\right)=\int_{\Omega}\varphi\left(y\right)f\left(x-y\right)dy,\;\forall\varphi\in C_{c}^{\infty}\left(\Omega\right),x\in\Omega\label{eq:H2}
\end{equation}
with respect to the inner product
\begin{equation}
\left\langle f_{\varphi},f_{\psi}\right\rangle _{\mathscr{H}_{f}}=\int_{\Omega}\int_{\Omega}\varphi\left(x\right)\overline{\psi\left(y\right)}f\left(x-y\right)dxdy,\;\forall\varphi,\psi\in C_{c}^{\infty}\left(\Omega\right).\label{eq:hi2}
\end{equation}
In particular, 
\begin{equation}
\left\Vert f_{\varphi}\right\Vert _{\mathscr{H}_{f}}^{2}=\int_{\Omega}\int_{\Omega}\varphi\left(x\right)\overline{\varphi\left(y\right)}f\left(x-y\right)dxdy,\;\forall\varphi\in C_{c}^{\infty}\left(\Omega\right)\label{eq:hn2}
\end{equation}
and 
\begin{equation}
\left\langle f_{\varphi},f_{\psi}\right\rangle _{\mathscr{H}_{f}}=\int_{\Omega}f_{\varphi}\left(x\right)\overline{\psi\left(x\right)}dx,\;\forall\phi,\psi\in C_{c}^{\infty}(\Omega).
\end{equation}
\end{lem}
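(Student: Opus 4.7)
The plan is to identify the ``integrated'' pre-Hilbert space $\{f_\varphi : \varphi \in C_c^\infty(\Omega)\}$ of \eqref{eq:H2} with the discrete pre-Hilbert space of finite sums $\sum c_j f_{x_j}$ from \eqref{eq:H1} via Riemann-sum approximation, and then to invoke $\varphi_{n,x} \to \delta_x$ from Lemma \ref{lem:dense} to transfer density between the two descriptions. The heuristic is that $f_\varphi$ should be viewed as the continuous superposition $\int \varphi(y)\, f_y \, dy$, so each $f_\varphi$ will arise as a norm-convergent limit of finite sums $\sum_j \varphi(y_j^{(N)})\,\Delta y_j^{(N)}\, f_{y_j^{(N)}}$.

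First I would fix $\varphi \in C_c^\infty(\Omega)$, partition its compactly supported domain with mesh tending to zero, and form the Riemann sums $S_N = \sum_j \varphi(y_j^{(N)})\,\Delta y_j^{(N)}\, f_{y_j^{(N)}}$. A direct computation using \eqref{eq:ip-discrete} expresses $\|S_N - S_M\|_{\mathscr{H}_f}^2$ as a double Riemann sum for $\int\!\!\int \varphi(x)\overline{\varphi(y)} f(x-y)\, dx\, dy$; continuity and boundedness of $f$ together with smoothness of $\varphi$ then make $\{S_N\}$ Cauchy in $\mathscr{H}_f$, and its limit is seen to agree pointwise with $f_\varphi$ of \eqref{eq:H2} by applying the reproducing property to each $S_N$. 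Running the same approximation argument on cross inner products $\langle S_N^\varphi, S_N^\psi \rangle_{\mathscr{H}_f}$ yields \eqref{eq:hi2}, and in particular \eqref{eq:hn2}.

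Next, for density, I would show each generator $f_x$ lies in the closure of $\{f_\varphi\}$ by taking $\varphi = \varphi_{n,x}$ from Lemma \ref{lem:dense} and expanding $\|f_{\varphi_{n,x}} - f_x\|_{\mathscr{H}_f}^2$ using the inner-product formula \eqref{eq:hi2}; the resulting terms $\int\!\!\int \varphi_{n,x}(s)\overline{\varphi_{n,x}(t)} f(s-t)\,ds\,dt$, $\int \varphi_{n,x}(s) f(s-x)\,ds$, and its conjugate all converge to $f(0)$ as $n \to \infty$, by continuity of $f$ at $0$ and the approximate-identity property of $\varphi_{n,x}$. Finally, the reproducing-type identity $\langle f_\varphi, f_\psi \rangle_{\mathscr{H}_f} = \int f_\varphi(x)\overline{\psi(x)}\,dx$ drops out of Fubini applied to \eqref{eq:hi2}, with the inner integral recognized as $f_\varphi(x)$. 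The main bookkeeping obstacle will be keeping signs and conjugations consistent among \eqref{eq:H0}, \eqref{eq:ip-discrete}, and \eqref{eq:H2} so that the Hermitian property $f(-t) = \overline{f(t)}$ is correctly absorbed into the reproducing identity; once that is in place, the remainder is routine Riemann-sum approximation and Fubini's theorem.
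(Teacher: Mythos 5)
Your proposal is correct and follows essentially the same route as the paper: the paper's own (very terse) proof rests on exactly the approximate-identity convergence $\left\Vert f_{\varphi_{n,x}}-f\left(\cdot-x\right)\right\Vert _{\mathscr{H}_{f}}\rightarrow0$ from Lemma \ref{lem:dense}, deferring the remaining details (the Riemann-sum identification of $f_{\varphi}$ with $\int\varphi\left(y\right)f_{y}\,dy$ and the Fubini step) to the cited references. You have simply written out those omitted details explicitly, which is consistent with the paper's intent.
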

\begin{proof}
Indeed, by Lemma \ref{lem:RKHS-def-by-integral}, we have 
\begin{equation}
\left\Vert f_{\varphi_{n,x}}-f\left(\cdot-x\right)\right\Vert _{\mathscr{H}_{f}}\rightarrow0,\;\mbox{as }n\rightarrow\infty.\label{eq:approx}
\end{equation}
Hence $\left\{ f_{\varphi}\right\} _{\varphi\in C_{c}^{\infty}\left(\Omega\right)}$
spans a dense subspace in $\mathscr{H}_{f}$. 

For more details, see \cite{MR863534,MR874059}.

\end{proof}

These two conditions (\ref{eq:bdd})($\Leftrightarrow$(\ref{eq:bdd2}))
below will be used to characterize elements in the Hilbert space $\mathscr{H}_{f}$.
\begin{thm}
\label{thm:HF}A continuous function $\xi:\Omega\rightarrow\mathbb{C}$
is in $\mathscr{H}_{f}$ if and only if there exists $A_{0}>0$, such
that
\begin{equation}
\sum\nolimits _{i}\sum\nolimits _{j}c_{i}\overline{c}_{j}\xi\left(x_{i}\right)\overline{\xi\left(x_{j}\right)}\leq A_{0}\sum\nolimits _{i}\sum\nolimits _{j}c_{i}\overline{c}_{j}f\left(x_{i}-x_{j}\right)\label{eq:bdd}
\end{equation}
for all finite system $\left\{ c_{i}\right\} \subset\mathbb{C}$ and
$\left\{ x_{i}\right\} \subset\Omega$.

Equivalently, for all $\psi\in C_{c}^{\infty}\left(\Omega\right)$,
\begin{eqnarray}
\left|\int_{\Omega}\psi\left(y\right)\overline{\xi\left(y\right)}dy\right|^{2} & \leq & A_{0}\int_{\Omega}\int_{\Omega}\psi\left(x\right)\overline{\psi\left(y\right)}f\left(x-y\right)dxdy\label{eq:bdd2}
\end{eqnarray}
Note that, if $\xi\in\mathscr{H}_{f}$, then the LHS of (\ref{eq:bdd2})
is $\vert\left\langle f_{\psi},\xi\right\rangle _{\mathscr{H}_{f}}\vert^{2}$.
Indeed,
\begin{eqnarray*}
\left|\left\langle \xi,f_{\psi}\right\rangle _{\mathscr{H}_{f}}\right|^{2} & = & \left|\left\langle \xi,\int_{\Omega}\psi\left(y\right)f_{y}\:dy\right\rangle _{\mathscr{H}_{f}}\right|^{2}\\
 & = & \left|\int_{\Omega}\overline{\psi\left(y\right)}\left\langle \xi,f_{y}\right\rangle _{\mathscr{H}_{f}}dy\right|^{2}\\
 & = & \left|\int_{\Omega}\overline{\psi\left(y\right)}\xi\left(y\right)dy\right|^{2}.\;(\text{by the reproducing property})
\end{eqnarray*}
\end{thm}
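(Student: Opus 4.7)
The plan is to prove Theorem \ref{thm:HF} by a standard RKHS duality argument: Cauchy-Schwarz and the reproducing property handle the forward implication, a bounded linear functional plus Riesz representation handle the converse, and Lemma \ref{lem:dense} supplies the passage between the discrete bound (\ref{eq:bdd}) and its integrated form (\ref{eq:bdd2}).

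For the forward direction, suppose $\xi\in\mathscr{H}_f$ and set $A_0 := \|\xi\|_{\mathscr{H}_f}^2$. By the reproducing property $\xi(x_i) = \langle f_{x_i}, \xi\rangle_{\mathscr{H}_f}$ and linearity in the first slot of the inner product (\ref{eq:ip-discrete}), one has $\sum_i c_i\xi(x_i) = \langle \sum_i c_i f_{x_i}, \xi\rangle_{\mathscr{H}_f}$. A single application of Cauchy-Schwarz then bounds $|\sum_i c_i\xi(x_i)|^2$ by $\|\xi\|_{\mathscr{H}_f}^2 \cdot \|\sum_i c_i f_{x_i}\|_{\mathscr{H}_f}^2$, and expanding the second factor via (\ref{eq:ip-discrete}) gives exactly (\ref{eq:bdd}). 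The integral version (\ref{eq:bdd2}) with the same $A_0$ follows from the same Cauchy-Schwarz applied to $\langle f_\psi, \xi\rangle_{\mathscr{H}_f}$, as the identity displayed at the end of the statement already makes explicit.

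For the converse, assume (\ref{eq:bdd}) holds for some $A_0 > 0$ and define the linear functional $L$ on the dense subspace $\mathscr{D} := \mathrm{span}\{f_x : x \in \Omega\} \subset \mathscr{H}_f$ by $L\bigl(\sum_i c_i f_{x_i}\bigr) := \sum_i c_i \xi(x_i)$. The hypothesis rewrites as $|L(g)|^2 \leq A_0\,\|g\|_{\mathscr{H}_f}^2$ for every $g\in\mathscr{D}$, which at once verifies well-definedness modulo $\|\cdot\|_{\mathscr{H}_f}$-null vectors and shows that $L$ extends by continuity to a bounded functional on all of $\mathscr{H}_f$ with $\|L\|\leq\sqrt{A_0}$. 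Riesz representation then yields $\eta\in\mathscr{H}_f$ with $L(g) = \langle g,\eta\rangle_{\mathscr{H}_f}$, and evaluating at $g = f_x$ together with one more use of the reproducing property gives $\xi(x) = L(f_x) = \langle f_x,\eta\rangle_{\mathscr{H}_f} = \eta(x)$, so $\xi = \eta\in\mathscr{H}_f$. The equivalence (\ref{eq:bdd})$\Leftrightarrow$(\ref{eq:bdd2}) finally follows by mollifier approximation in both directions: Riemann-sum weights $c_i = \overline{\psi(x_i)}\,\Delta x_i$ inserted into (\ref{eq:bdd}) with mesh tending to zero recover (\ref{eq:bdd2}), and the test function $\psi = \sum_i c_i \varphi_{n,x_i}$ inserted into (\ref{eq:bdd2}), together with (\ref{eq:approx}) and $n\to\infty$, recovers (\ref{eq:bdd}).

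The main technical obstacle is the conjugation bookkeeping: the reproducing identity has to be oriented so that $\sum_i c_i\xi(x_i) = \langle\sum_i c_i f_{x_i},\xi\rangle_{\mathscr{H}_f}$, matching the linear-in-the-first-slot convention of (\ref{eq:ip-discrete}) and making the RHS of (\ref{eq:bdd}) appear precisely as $A_0\|\sum_i c_i f_{x_i}\|_{\mathscr{H}_f}^2$. The opposite orientation would force $L$ to represent $\overline{\xi}$ rather than $\xi$, and since the two Hermitian forms $\sum_{i,j}c_i\overline{c_j}f(x_i-x_j)$ and $\sum_{i,j}\overline{c_i}c_j f(x_i-x_j)$ are not identical in general, keeping the sides of (\ref{eq:bdd}) aligned with the correct slot is essential; once that is fixed, each implication reduces to a single application of Cauchy-Schwarz or of Riesz representation.
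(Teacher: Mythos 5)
Your argument is correct and matches the approach the paper intends: the ``Note'' embedded in the statement of Theorem \ref{thm:HF} is exactly your forward-direction computation (reproducing property plus Cauchy--Schwarz with $A_{0}=\left\Vert \xi\right\Vert _{\mathscr{H}_{f}}^{2}$), and your converse via the bounded functional $L\bigl(\sum_{i}c_{i}f_{x_{i}}\bigr)=\sum_{i}c_{i}\xi\left(x_{i}\right)$ and Riesz representation is the standard Aronszajn argument that the paper leaves implicit, with the passage between (\ref{eq:bdd}) and (\ref{eq:bdd2}) handled, as in Lemma \ref{lem:dense}, by Riemann sums and the mollifiers $\varphi_{n,x}$. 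Your closing remark on the conjugation bookkeeping is well taken but harmless: since both quadratic forms are invariant under $c_{i}\mapsto\overline{c_{i}}$ after quantifying over all finite systems, either orientation yields an equivalent condition.
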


\section{\label{sec:para}The parallels of p.d. functions vs distributions}

In this section we prove the following theorem:
\begin{thm}
\label{thm:p1}~
\begin{enumerate}
\item[(a)]  Let $f$ be a continuous positive definite (p.d.) function on $\mathbb{R}$
(a p.d. tempered distribution \cite{MR0185423,MR0179587}); then there
is a unique finite positive Borel measure $\mu$ on $\mathbb{R}$
(resp., a unique tempered measure on $\mathbb{R}$) such that $f=\widehat{\mu}$.
\item[(b)]  Given $f$ as above, let $\mathscr{H}_{f}$ denote the corresponding
kernel Hilbert space, i.e., the Hilbert completion of $\left\{ \varphi\ast f\right\} _{\varphi\in C_{c}\left(\Omega\right)}$
(resp. $\varphi\in\mathcal{S}$) w.r.t 
\[
\left\Vert \varphi\ast f\right\Vert _{\mathscr{H}_{f}}^{2}=\int_{\mathbb{R}}\int_{\mathbb{R}}\varphi\left(x\right)\overline{\varphi\left(y\right)}f\left(x-y\right)dxdy
\]
resp., $\left\langle f\left(x-y\right),\varphi\ast\overline{\varphi}\right\rangle $;
action in the sense of distributions. Then there is a unique isometric
transform
\begin{gather*}
\mathscr{H}_{f}\xrightarrow{\;T_{f}\;}L^{2}\left(\mathbb{R},\mathscr{B},\mu\right),\quad T_{f}\left(\varphi\ast f\right)=\widehat{\varphi},\;\text{i.e.,}\\
\left\Vert \varphi\ast f\right\Vert _{\mathscr{H}_{f}}^{2}=\int_{\mathbb{R}}\left|\widehat{\varphi}\right|^{2}d\mu=\left\Vert T_{f}\varphi\right\Vert _{L^{2}\left(\mu\right)}^{2}.
\end{gather*}
\item[(c)]  If $\mu$ is tempered, e.g., if $\int_{\mathbb{R}}\frac{d\mu\left(\lambda\right)}{1+\lambda^{2}}<\infty$,
then 
\[
\left\Vert \varphi\ast f\right\Vert _{\mathscr{H}_{f}}^{2}=\int\left(\left|\widehat{\varphi}\right|^{2}+\big|\widehat{\left(D_{x}\varphi\right)}\big|^{2}\right)\frac{d\mu\left(\lambda\right)}{1+\lambda^{2}};
\]
where $D_{x}\varphi=\frac{d\varphi}{dx}$, and where ``$\widehat{\,\cdot\,}$''
denotes the standard Fourier transform on $\mathbb{R}$. 
\end{enumerate}
\end{thm}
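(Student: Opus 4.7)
The plan is to derive (b) and (c) from (a) via a direct Fourier-side computation, treating the continuous-function case and the tempered-distribution case in parallel. Part (a) itself is the classical Bochner theorem in the continuous case, and its Schwartz generalization---the Bochner--Schwartz theorem---in the tempered-distribution case: in both settings, positive definiteness of $f$ forces the distributional Fourier transform $\widehat{f}$ to be a nonnegative (finite, resp.\ tempered) Borel measure $\mu$, and uniqueness of $\mu$ follows from the injectivity of the Fourier transform on the relevant test-space dual.

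For (b), I would substitute the Bochner representation $f(x-y)=\int_{\mathbb{R}}e^{i(x-y)\lambda}\,d\mu(\lambda)$ into the defining integral for the RKHS norm (Lemma \ref{lem:RKHS-def-by-integral}) and apply Fubini:
\[
\iint \varphi(x)\overline{\varphi(y)}f(x-y)\,dx\,dy
= \int_{\mathbb{R}}\Bigl|\int\varphi(x)e^{ix\lambda}\,dx\Bigr|^{2}d\mu(\lambda)
= \int_{\mathbb{R}}|\widehat{\varphi}(\lambda)|^{2}\,d\mu(\lambda).
\]
This identifies $\|\varphi\ast f\|^{2}_{\mathscr{H}_{f}}$ with $\|\widehat{\varphi}\|^{2}_{L^{2}(\mu)}$ on the dense subspace $\{\varphi\ast f : \varphi\in C_{c}^{\infty}(\Omega)\}$, so the assignment $\varphi\ast f\mapsto\widehat{\varphi}$ is well-defined modulo norm-zero elements and extends uniquely by continuity to the claimed isometry $T_{f}:\mathscr{H}_{f}\to L^{2}(\mathbb{R},\mathscr{B},\mu)$. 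In the tempered-distribution case I would interpret the left-hand side as the pairing of $f\in\mathcal{S}'(\mathbb{R})$ with $\varphi\otimes\overline{\varphi}\in\mathcal{S}(\mathbb{R}^{2})$ and invoke the distributional Parseval identity $\langle f,\psi\rangle=\langle\widehat{f},\widehat{\psi}\rangle=\int\widehat{\psi}\,d\mu$ to obtain the same integral representation.

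For (c), the identity is algebraic once (b) is in place: since $|\widehat{D_{x}\varphi}(\lambda)|^{2}=\lambda^{2}|\widehat{\varphi}(\lambda)|^{2}$ in either Fourier convention, we have $|\widehat{\varphi}|^{2}+|\widehat{D_{x}\varphi}|^{2}=(1+\lambda^{2})|\widehat{\varphi}|^{2}$, so dividing by $1+\lambda^{2}$ and integrating against $\mu$ returns $\int|\widehat{\varphi}|^{2}\,d\mu=\|\varphi\ast f\|^{2}_{\mathscr{H}_{f}}$ by (b). The reweighting $d\mu/(1+\lambda^{2})$ is convenient precisely because the hypothesis $\int d\mu/(1+\lambda^{2})<\infty$ turns this into a finite measure, so the formula exhibits the RKHS norm as an integral against a finite weight even when $\mu$ itself is only tempered. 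The principal obstacle I anticipate is the distributional step in (b): one must justify the Fubini/Parseval rearrangement purely through the distributional Fourier transform, since the absolute-integrability estimates available in the finite-measure case do not apply directly; this is a bookkeeping issue about pairing Schwartz functions with $f\in\mathcal{S}'$ and invoking $\widehat{f}=\mu$ from (a), but it is the step where care is required.
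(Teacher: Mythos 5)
Your argument is correct, but it takes a genuinely different route from the paper's. You treat part (a) as a citation (Bochner, resp.\ Bochner--Schwartz), then obtain (b) by substituting $f(x-y)=\int e^{i(x-y)\lambda}d\mu(\lambda)$ into the double integral and applying Fubini (resp.\ the distributional Parseval identity $\langle f,\psi\rangle=\int\widehat{\psi}\,d\mu$), after which (c) is the algebraic identity $|\widehat{\varphi}|^{2}+|\widehat{D_{x}\varphi}|^{2}=(1+\lambda^{2})|\widehat{\varphi}|^{2}$. The paper instead proves (a) and (b) simultaneously from the operator-theoretic side: it introduces the translation representation $U_{t}(\varphi\ast f)=\varphi(\cdot-t)\ast f$ on $\mathscr{H}_{f}$, invokes Stone's theorem to get a projection-valued measure $E$, \emph{defines} $\mu:=\Vert E(d\lambda)w_{0}\Vert^{2}_{\mathscr{H}_{f}}$ with $w_{0}=f(\cdot-0)$, and then computes $\varphi\ast f=\int\widehat{\varphi}(\lambda)E(d\lambda)w_{0}$ to read off the isometry; the measure $\mu$ of part (a) is thus constructed rather than quoted. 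Your route is shorter and more elementary, at the cost of importing Bochner--Schwartz as a black box and of requiring the careful Fubini/Parseval justification you flag yourself (which is indeed the only delicate point, and is handled correctly since $\widehat{\varphi}\in\mathcal{S}$ is square-integrable against any tempered $\mu$). The paper's spectral route buys two things: it makes the existence of $\mu$ self-contained, and --- as the later sections show --- it transfers verbatim to non-abelian locally compact groups by replacing Stone's theorem with the SNAG theorem or Segal's Plancherel decomposition, where no pointwise Bochner substitution is available. Neither you nor the paper addresses surjectivity of $T_{f}$ onto $L^{2}(\mu)$ (density of $\{\widehat{\varphi}\}$ in $L^{2}(\mu)$), but this is a minor omission common to both.
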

\begin{proof}
The proof will be given below. It will be divided up in a sequence
of Lemmas, which by their own merit might be of independent interest. 
\end{proof}
\begin{cor}
Let a function $f$ (or a tempered distribution) be given on a finite
open interval in $\mathbb{R}$, but assumed positive definite there;
then it automatically has a positive definite extension to $\mathbb{R}$
(in the same category), and so the conclusion of Theorem \ref{thm:p1}
still applies to $f$, or referring to the corresponding p.d. extension.
\end{cor}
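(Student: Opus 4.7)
My approach is to reduce the corollary to Theorem \ref{thm:p1} by first producing a global positive definite extension in the ambient category (continuous p.d. function, resp.\ tempered p.d. distribution), and then applying parts (a)--(c) to that extension. In the continuous case, the existence of such an extension is the classical Krein extension theorem; in the distributional case, it is the Schwartz/Devinatz analogue. The core is an operator-theoretic construction on the RKHS of the \emph{locally} defined $f$, which I would set up as follows.

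Let $\Omega=(a,b)$ be the given finite open interval, so $f$ is p.d. on $\Omega-\Omega$. First, I build $\mathscr{H}_{f}$ exactly as in Lemma \ref{lem:RKHS-def-by-integral}, using test functions supported in $\Omega$. Second, I introduce the densely-defined skew-symmetric operator
\[
D_{f}: f_{\varphi}\longmapsto f_{\varphi'},\qquad \varphi\in C_{c}^{\infty}(\Omega),
\]
whose skew-symmetry $\langle D_{f}u,v\rangle_{\mathscr{H}_{f}}=-\langle u,D_{f}v\rangle_{\mathscr{H}_{f}}$ follows by integration by parts in the inner-product formula (\ref{eq:hi2}). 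Third, I observe that the antilinear involution $J: f_{\varphi}\mapsto f_{\overline{\varphi}}$ is an isometry of $\mathscr{H}_{f}$ commuting with $D_{f}$ (this uses $f(-x)=\overline{f(x)}$, which follows from positive definiteness), so the deficiency subspaces of $D_{f}$ are antiunitarily equivalent and therefore $n_{+}(D_{f})=n_{-}(D_{f})$. Hence $D_{f}$ admits at least one skew-adjoint extension $\widetilde{D}$.

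Fourth, by Stone's theorem $U(t):=\exp(t\widetilde{D})$ is a strongly continuous unitary one-parameter group on $\mathscr{H}_{f}$. Fixing $x_{0}\in\Omega$ and using the mollifier from Lemma \ref{lem:dense}, I set $f_{0}^{(n)}:=f_{\varphi_{n,x_{0}}}\in\mathscr{H}_{f}$ and define
\[
F(t):=\lim_{n\to\infty}\bigl\langle f_{0}^{(n)},\,U(t)f_{0}^{(n)}\bigr\rangle_{\mathscr{H}_{f}},\qquad t\in\mathbb{R}.
\]
Then $F$ is continuous and p.d.\ on all of $\mathbb{R}$; for small $|t|$, $U(t)$ implements genuine translation $f_{x}\mapsto f_{x+t}$ on elements whose supports remain inside $\Omega$, so $F\restriction_{\Omega-\Omega}=f$ by the reproducing property together with (\ref{eq:approx}). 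Theorem \ref{thm:p1} then applies to $F$.

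The main obstacle is Step three: producing the skew-adjoint extension. In the continuous case, the $J$-symmetry argument sketched above is the classical route and yields equal deficiency indices for free. In the tempered-distribution case, there is an extra obstacle: one must arrange for the spectral measure of $\widetilde{D}$ to be tempered so that the extension is itself a \emph{tempered} p.d. distribution. This is handled by choosing the skew-adjoint extension carefully and by transferring polynomial-growth control on $f$ tested against Schwartz functions into polynomial bounds on the matrix coefficients $t\mapsto\langle u,U(t)v\rangle$, which by Bochner yields a tempered spectral measure; this is the Schwartz/Devinatz refinement of Krein's theorem.
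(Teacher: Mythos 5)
Your overall strategy---build $\mathscr{H}_{f}$ from the local data, realize differentiation/translation as a densely defined skew-Hermitian operator $D_{f}$, produce a skew-adjoint extension, and read off the global p.d.\ extension as a matrix coefficient of the resulting unitary one-parameter group---is precisely the mechanism behind the result the paper invokes: the paper's own proof of this corollary is essentially a citation of the main theorem of \cite{MR3559001} together with Lemmas \ref{lem:dense}--\ref{lem:RKHS-def-by-integral}, and that theorem is proved by exactly this operator-extension argument. So you are reconstructing the cited proof rather than taking a different route.

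However, your Step three contains a genuine error at the point you yourself flag as the crux. The map $J\colon f_{\varphi}\mapsto f_{\overline{\varphi}}$ is \emph{not} an isometry of $\mathscr{H}_{f}$ in general: from (\ref{eq:hi2}),
\[
\left\langle f_{\overline{\varphi}},f_{\overline{\psi}}\right\rangle _{\mathscr{H}_{f}}=\int_{\Omega}\int_{\Omega}\overline{\varphi\left(x\right)}\psi\left(y\right)f\left(x-y\right)dxdy,\qquad\overline{\left\langle f_{\varphi},f_{\psi}\right\rangle _{\mathscr{H}_{f}}}=\int_{\Omega}\int_{\Omega}\overline{\varphi\left(x\right)}\psi\left(y\right)f\left(y-x\right)dxdy,
\]
the second identity using $\overline{f\left(u\right)}=f\left(-u\right)$. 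These agree only when $f$ is even, i.e.\ real-valued; for a genuinely complex p.d.\ function your $J$ distorts the norm. Moreover your $J$ \emph{commutes} with $D_{f}$ (since $(\overline{\varphi})'=\overline{\varphi'}$), hence anticommutes with the Hermitian operator $-iD_{f}$, and an anticommuting conjugation maps each deficiency space $\ker\left(\left(-iD_{f}\right)^{*}\mp i\right)$ to itself, so it yields no comparison of the two indices even where it is isometric. The standard repair is to conjugate \emph{and} reflect: for $\Omega=\left(a,b\right)$ set $\left(J\varphi\right)\left(x\right):=\overline{\varphi\left(a+b-x\right)}$; a change of variables shows $\left\langle f_{J\varphi},f_{J\psi}\right\rangle _{\mathscr{H}_{f}}=\overline{\left\langle f_{\varphi},f_{\psi}\right\rangle _{\mathscr{H}_{f}}}$, and since $\left(J\varphi\right)'=-J\left(\varphi'\right)$ this $J$ anticommutes with $D_{f}$, hence commutes with $-iD_{f}$, and von Neumann's theorem gives equal deficiency indices. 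Two smaller points: (1) with your normalization $F\left(t\right)=\left\langle f_{x_{0}},U\left(t\right)f_{x_{0}}\right\rangle _{\mathscr{H}_{f}}$ one gets $f\left(-t\right)=\overline{f\left(t\right)}$ on the overlap (by (\ref{eq:ip-discrete})), so the order of the arguments should be reversed, and matching $F$ to $f$ on all of $\Omega-\Omega$, not merely on the $t$'s with $x_{0}+t\in\Omega$, requires writing $F\left(x-y\right)=\left\langle U\left(x\right)w,U\left(y\right)w\right\rangle $ for a suitable cyclic $w$ and all $x,y\in\Omega$; (2) in the tempered-distribution case your closing paragraph is a plan rather than a proof---the claim that the skew-adjoint extension can be ``chosen carefully'' to make the extension tempered is exactly the nontrivial content that the paper, too, delegates to \cite{MR3559001}.
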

\begin{proof}
The result uses a main theorem in \cite{MR3559001}, as well as Lemmas
\ref{lem:dense}-\ref{lem:RKHS-def-by-integral} above.
\end{proof}
\begin{rem}
In the case of tempered distributions, the inner product in the RKHS
$\mathscr{H}_{f}$ is as follows: First, given $f$ and $\varphi\in\mathcal{S}$,
$f$ a p.d. tempered distribution; then the convolution $f\ast\varphi$
is a well defined tempered distribution, and so, for $\psi\in\mathcal{S}$,
the expression $\left\langle f\ast\varphi,\overline{\psi}\right\rangle $
denotes the distribution $f\ast\varphi$ applied to $\overline{\psi}\in\mathcal{S}$.
Hence, the $\mathscr{H}_{f}$-inner product is: 
\[
\left\langle f\ast\varphi,f\ast\psi\right\rangle _{\mathscr{H}_{f}}:=\left\langle f\ast\varphi,\overline{\psi}\right\rangle ,
\]
with this interpretation of action of the distribution $f\ast\varphi$
and the test function $\overline{\psi}$. The p.d. property for $f$
amounts to 
\[
\left\langle f\ast\varphi,\overline{\varphi}\right\rangle \geq0,\;\forall\varphi\in\mathcal{S}.
\]
\end{rem}
\begin{rem}
The conclusions in the statement of Theorem \ref{thm:p1} hold also
when $\mathbb{R}$ is replaced with an arbitrary locally compact Abelian
group $G$. The modification are as follows: Now $f$ will instead
be a given continuous p.d. function (or a tempered distribution) on
$G$. The modified conclusion (b), see Theorem \ref{thm:p1}, is then: 

(b') $\mathscr{H}_{f}\xmapsto{\;T_{f}\;}L^{2}(\widehat{G},\mathscr{B},\mu)$,
$T_{f}\left(\varphi\ast f\right)=\widehat{\varphi}$, and 
\[
\left\Vert \varphi\ast f\right\Vert _{\mathscr{H}_{f}}^{2}=\int_{\widehat{G}}\left|\widehat{\varphi}\right|^{2}d\mu;
\]
where $\widehat{G}$ denotes the Pontryagin dual group to $G$, i.e.,
the group of all continuous characters $\chi$ on $G$; for functions
$\varphi$ on $G$ , the transform $\widehat{\varphi}$ is then 
\[
\widehat{\varphi}\left(\chi\right)=\int_{G}\chi\left(g\right)\varphi\left(g\right)dg
\]
with $dg$ denoting Haar measure on $G$; and further $\mu$ is a
Borel measure on $\widehat{G}$. 
\end{rem}
\begin{rem}
The main theme here is the interconnection between (i) the study of
extensions of locally defined continuous and positive definite  functions
$f$ on groups on the one hand, and, on the other, (ii) the question
of extensions for an associated system of unbounded Hermitian operators
with dense domain in a reproducing kernel Hilbert space (RKHS) $\mathscr{H}_{f}$
associated to $f$.

The analysis is non-trivial even if $G=\mathbb{R}^{n}$, and even
if $n=1$. If $G=\mathbb{R}^{n}$, we are concerned in (ii) with the
study of systems of $n$ skew-Hermitian operators $\left\{ S_{i}\right\} $
on a common dense domain in Hilbert space, and in deciding whether
it is possible to find a corresponding system of strongly commuting
selfadjoint operators $\left\{ T_{i}\right\} $ such that, for each
value of $i$, the operator $T_{i}$ extends $S_{i}$.

The version of this for non-commutative Lie groups $G$ will be stated
in the language of unitary representations of $G$, and corresponding
representations of the Lie algebra $La\left(G\right)$ by skew-Hermitian
unbounded operators.

In summary, for (i) we are concerned with partially defined positive
definite continuous functions $f$ on a Lie group; i.e., at the outset,
such a function $f$ will only be defined on a connected proper subset
in $G$. From this partially defined p.d. function $f$ we then build
a reproducing kernel Hilbert space $\mathscr{H}_{f}$, and the operator
extension problem (ii) is concerned with operators acting on $\mathscr{H}_{f}$,
as well as with unitary representations of $G$ acting on $\mathscr{H}_{f}$.
If the Lie group $G$ is not simply connected, this adds a complication,
and we are then making use of the associated simply connected covering
group. 

Because of the role of positive definite functions in harmonic analysis,
in statistics, and in physics, the connections in both directions
is of interest, i.e., from (i) to (ii), and vice versa. This means
that the notion of \textquotedblleft extension\textquotedblright{}
for question (ii) must be inclusive enough in order to encompass all
the extensions encountered in (i). For this reason enlargement of
the initial Hilbert space $\mathscr{H}_{f}$ are needed. In other
words, it is necessary to consider also operator extensions which
are realized in a dilation-Hilbert space; a new Hilbert space containing
$\mathscr{H}_{f}$ isometrically, and with the isometry intertwining
the respective operators. 

For more details, we refer the reader to \cite{MR3559001} and the
papers cited there.
\end{rem}
Key steps in the present application is the identification of a unitary
representation $\left\{ U_{t}\right\} _{t\in\mathbb{R}}$ acting in
the RKHS $\mathscr{H}_{f}$; it applies both when $f$ is continuous
and p.d. (Bochner), and when $f$ is a p.d. tempered distribution
(Schwartz \cite{MR0185423,MR0179587})
\[
U_{t}\left(\varphi\ast f\right):=\varphi\left(\cdot-t\right)\ast f=\varphi\ast f\left(\cdot+t\right).
\]
One concludes that
\begin{enumerate}
\item $U_{t_{1}}U_{t_{2}}=U_{t_{1}+t_{2}}$, $\forall t_{i}\in\mathbb{R}$;
and 
\item $U_{t}$ is strongly continuous, i.e., $\lim_{t\rightarrow0}\left\Vert U_{t}w-w\right\Vert _{\mathscr{H}_{f}}=0$
holds for $\forall w\in\mathscr{H}_{f}$. 
\end{enumerate}
Now assume that $f$ is p.d., and let $\left\{ U_{t}\right\} _{t\in\mathbb{R}}$
be the unitary group acting in the corresponding RKHS. By Stone's
theorem \cite{MR1503079}, there exists a projection valued measure
$E$ on $\left(\mathbb{R},\mathscr{B}\right)$, $\mathscr{B}=$ Borel
sigma-algebra in $\mathbb{R}$. That is, $E:\mathscr{B}\rightarrow\text{proj}(\mathscr{H}_{f})$
satisfying 
\begin{alignat}{2}
E\left(B\right)^{*} & =E\left(B\right)=E\left(B\right)^{2}, & \quad & \forall B\in\mathscr{B}\label{eq:a1}\\
E\left(B\cap C\right) & =E\left(B\right)=E\left(C\right), &  & \forall B,C\in\mathscr{B}\label{eq:a2}
\end{alignat}
such that 
\begin{equation}
U_{t}=\int_{\mathbb{R}}e^{i\lambda t}E\left(d\lambda\right),\;t\in\mathbb{R}.\label{eq:a3}
\end{equation}
(See, e.g., \cite{MR1503079}.)
\begin{lem}
Let $f$, $\mathscr{H}_{f}$, $\left\{ U_{t}\right\} _{t\in\mathbb{R}}$
be as above. Let $w_{0}=f\left(\cdot-0\right)\in\mathscr{H}_{f}$,
and set 
\begin{equation}
\mu:=\left\Vert E\left(d\lambda\right)w_{0}\right\Vert _{\mathscr{H}_{f}}^{2}\label{eq:a4}
\end{equation}
then 
\begin{equation}
\left\Vert \varphi\ast f\right\Vert _{\mathscr{H}_{f}}^{2}=\int_{\mathbb{R}}\left|\widehat{\varphi}\left(\lambda\right)\right|^{2}d\mu\left(\lambda\right).\label{eq:a5}
\end{equation}
\end{lem}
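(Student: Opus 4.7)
The plan is to represent $\varphi * f$ as a Bochner integral of the orbit $\{U_y w_0\}_{y \in \mathbb{R}}$ in $\mathscr{H}_f$ weighted by $\varphi$, then diagonalize $U_y$ via Stone's spectral representation \eqref{eq:a3}, and finally exploit the orthogonality of the projection-valued measure $E$ from \eqref{eq:a1}--\eqref{eq:a2} to read off the squared norm in terms of the scalar measure $\mu$ defined in \eqref{eq:a4}.

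The first step is to establish the identity $U_t f_y = f_{y+t}$, and in particular $U_y w_0 = f_y$. Recall from \eqref{eq:approx} and \lemref{lem:RKHS-def-by-integral} that the mollifiers satisfy $\varphi_{n,y} * f \to f_y$ in $\mathscr{H}_f$ as $n \to \infty$. A direct translation check gives $U_t(\varphi_{n,0} * f) = \varphi_{n,t} * f$, and combining these with the strong continuity of $U_t$ on $\mathscr{H}_f$ yields $U_t w_0 = \lim_n U_t(\varphi_{n,0} * f) = \lim_n \varphi_{n,t} * f = f_t$.

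Using $f_y = U_y w_0$, one can rewrite, for $\varphi \in C_{c}^{\infty}(\mathbb{R})$,
\[
\varphi * f \;=\; \int_{\mathbb{R}} \varphi(y)\, f_y \, dy \;=\; \int_{\mathbb{R}} \varphi(y)\, U_y w_0 \, dy,
\]
interpreted as a Bochner integral in $\mathscr{H}_f$; integrability is guaranteed by the compact support of $\varphi$ together with the strong continuity of $y \mapsto U_y w_0$. Substituting \eqref{eq:a3} and applying a vector-valued Fubini theorem produces
\[
\varphi * f \;=\; \int_{\mathbb{R}} \Bigl(\int_{\mathbb{R}} \varphi(y) e^{i\lambda y}\, dy \Bigr)\, E(d\lambda)\, w_0 \;=\; \int_{\mathbb{R}} \widehat{\varphi}(\lambda)\, E(d\lambda)\, w_0,
\]
where the second equality reflects the Fourier convention in force (a sign change in $\lambda$ is harmlessly absorbed into $\mu$ if the opposite convention is adopted).

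Finally, the orthogonality property of the spectral measure, $\langle E(A) w_0, E(B) w_0\rangle_{\mathscr{H}_f} = \|E(A\cap B) w_0\|_{\mathscr{H}_f}^{2}$, which follows from \eqref{eq:a1}--\eqref{eq:a2}, converts the $E$-integral into a scalar integral against $d\mu$:
\[
\|\varphi * f\|^{2}_{\mathscr{H}_f} \;=\; \int_{\mathbb{R}} |\widehat{\varphi}(\lambda)|^{2}\, \|E(d\lambda) w_0\|^{2}_{\mathscr{H}_f} \;=\; \int_{\mathbb{R}} |\widehat{\varphi}(\lambda)|^{2}\, d\mu(\lambda),
\]
which is \eqref{eq:a5}. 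The technical hinge I expect is the first step: rigorously transferring the \emph{a priori} definition of $U_t$ from the dense subspace $\{\psi * f\}_{\psi}$ to act on the reproducing kernels $f_y \in \mathscr{H}_f$ by $U_t f_y = f_{y+t}$; once this identification is granted, the rest reduces to Stone's theorem together with a routine vector-valued Fubini argument.
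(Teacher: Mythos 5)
Your proof is correct and follows essentially the same route as the paper's: write $\varphi\ast f$ as a vector-valued integral $\int\varphi(t)\,U_{t}w_{0}\,dt$, insert Stone's spectral resolution \eqref{eq:a3}, apply a Fubini interchange to obtain $\int\widehat{\varphi}(\lambda)\,E(d\lambda)w_{0}$, and use orthogonality of the projection-valued measure to reduce to the scalar measure $\mu$. The only difference is that you explicitly justify $U_{t}w_{0}=f_{t}$ via the mollifier approximation from \eqref{eq:approx}, a step the paper takes for granted.
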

\begin{proof}
(\ref{eq:a4})$\Rightarrow$(\ref{eq:a5}) For $\varphi\in C_{c}\left(\mathbb{R}\right)$,
we have 
\[
\varphi\ast f:=\int_{\mathbb{R}}\varphi\left(t\right)f\left(\cdot-t\right)dt=\int\varphi\left(t\right)U_{t}w_{0}\,dt\;\left(\text{by \ensuremath{\left(\ref{eq:a1}\right)}, \ensuremath{\left(\ref{eq:a4}\right)}}\right)
\]
i.e., convolution, and 
\begin{eqnarray*}
\varphi\ast f & = & \int_{\mathbb{R}}\varphi\left(t\right)U_{t}w_{0}dt\\
 & \underset{\text{by \ensuremath{\left(\ref{eq:a3}\right)}}}{=} & \int_{\mathbb{R}}\varphi\left(t\right)\int e^{it\lambda}\underset{\text{PVM}}{\underbrace{E\left(d\lambda\right)}}w_{0}dt\\
 & \underset{\left(\text{Fubini}\right)}{=} & \int_{\mathbb{R}}\left(\int_{\mathbb{R}}\varphi\left(t\right)e^{it\lambda}dt\right)E\left(d\lambda\right)w_{0}\\
 &  & \int_{\mathbb{R}}\widehat{\varphi}\left(\lambda\right)E\left(d\lambda\right)w_{0}.
\end{eqnarray*}
It follows that 
\begin{eqnarray*}
\left\Vert \varphi\ast f\right\Vert _{\mathscr{H}_{f}}^{2} & \underset{\text{by \ensuremath{\left(\ref{eq:a2}\right)}}}{=} & \int_{\mathbb{R}}\left|\widehat{\varphi}\left(\lambda\right)\right|^{2}\left\Vert E\left(d\lambda\right)w_{0}\right\Vert _{\mathscr{H}_{f}}^{2}\\
 & \underset{\text{by \ensuremath{\left(\ref{eq:a4}\right)}}}{=} & \int_{\mathbb{R}}\left|\widehat{\varphi}\left(\lambda\right)\right|^{2}d\mu\left(\lambda\right),
\end{eqnarray*}
i.e., use $\left\langle E\left(d\lambda_{1}\right)w_{0},E\left(d\lambda_{2}\right)w_{0}\right\rangle _{\mathscr{H}_{f}}=\left\langle w_{0},E\left(d\lambda_{1}d\lambda_{2}\right)w_{0}\right\rangle _{\mathscr{H}_{f}}=\left\Vert E\left(d\lambda\right)w_{0}\right\Vert _{\mathscr{H}_{f}}^{2}$. 
\end{proof}
\textbf{Conclusion.} The RKHS $\mathscr{H}_{f}$ is the completion
of $\varphi\ast f$, $\varphi\in C_{c}\left(\mathbb{R}\right)$, with
respect to the norm $\int\left|\widehat{\varphi}\left(\lambda\right)\right|^{2}d\mu\left(\lambda\right)$,
via the mapping $\varphi\ast f\longmapsto\widehat{\varphi}\in L^{2}\left(\mathbb{R},\mu\right)$. 
\begin{rem}
Note that the proof is \emph{mutatis mutandis} to the case of positive
definite tempered distributions in the sense of L. Schwartz \cite{MR0185423,MR0179587}. 

We also get an extension of $\varphi\ast f$ to $\varphi\ast h$,
$\forall h\in\mathscr{H}_{f}\left(=\text{RKHS}\right)$ as follows. 

First define $\left\{ U_{t}\right\} _{t\in\mathbb{R}}$ as a unitary
representation of $\left(\mathbb{R},+\right)$ on $\mathscr{H}_{f}$;
and then, for $h\in\mathscr{H}_{f}$, set ($\varphi\in C_{c}\left(\mathbb{R}\right)$,
or $\varphi\in\mathcal{S}$)
\[
\varphi\ast h=\int_{\mathbb{R}}\varphi\left(t\right)U_{t}h\:dt.
\]
Then we have:
\[
\left\Vert \varphi\ast h\right\Vert _{\mathscr{H}_{f}}\leq\left(\int_{\mathbb{R}}\left|\varphi\left(t\right)\right|dt\right)\left\Vert h\right\Vert _{\mathscr{H}_{f}}=\left\Vert \varphi\right\Vert _{L^{1}}\left\Vert h\right\Vert ,\;\forall\varphi\in\mathcal{S},\:\forall h\in\mathscr{H}_{f}.
\]
\end{rem}
\begin{cor}
For every tempered positive definite measure $\mu$ (see Theorem \ref{thm:p1})
there is a unique Gaussian process $X=X^{\left(\mu\right)}$ indexed
by $x\in\mathbb{R}$, with mean zero, and variance 
\[
r^{\left(\mu\right)}\left(x\right)=\mathbb{E}\left(\big|X_{x}^{\left(\mu\right)}\big|^{2}\right)=\int_{\mathbb{R}}\left|1-e^{i\lambda x}\right|^{2}\frac{d\mu\left(\lambda\right)}{\lambda^{2}},
\]
and in addition, 
\[
\mathbb{E}\left(X_{x}^{\left(\mu\right)}\overline{X_{y}^{\left(\mu\right)}}\right)=\frac{r^{\left(\mu\right)}\left(\left|x\right|\right)+r^{\left(\mu\right)}\left(\left|y\right|\right)-r^{\left(\mu\right)}\left(\left|x-y\right|\right)}{2},
\]
and 
\[
\mathbb{E}\left(\big|X_{x}^{\left(\mu\right)}-X_{y}^{\left(\mu\right)}\big|^{2}\right)=r^{\left(\mu\right)}\left(\left|x-y\right|\right).
\]
\end{cor}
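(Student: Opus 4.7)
The plan is to view $r^{(\mu)}$ as a \emph{variogram} and construct the Gaussian process via its covariance kernel, verifying positive definiteness through the spectral representation furnished by the tempered measure $\mu$.

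First I would check that $r^{(\mu)}$ is well-defined. Since $\bigl|1-e^{i\lambda x}\bigr|^{2}=2(1-\cos(\lambda x))$ is bounded by $\min(2,\lambda^{2}x^{2})$, the integrand $\bigl|1-e^{i\lambda x}\bigr|^{2}/\lambda^{2}$ is bounded near $\lambda=0$ (by $x^{2}$) and decays like $\lambda^{-2}$ at infinity; hence $\int_{\mathbb{R}}\bigl|1-e^{i\lambda x}\bigr|^{2}\lambda^{-2}\,d\mu(\lambda)<\infty$ precisely by the tempered hypothesis $\int d\mu(\lambda)/(1+\lambda^{2})<\infty$, which is guaranteed by Theorem \ref{thm:p1}. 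Note also that $r^{(\mu)}$ depends only on $|x|$, since the integrand equals $2(1-\cos(\lambda x))/\lambda^{2}$ which is even in $x$.

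Next, define the candidate covariance
\[
K(x,y):=\frac{r^{(\mu)}(|x|)+r^{(\mu)}(|y|)-r^{(\mu)}(|x-y|)}{2}.
\]
The main step is to show $K$ is positive definite. I would expand the defining integrals and use the elementary identity
\[
(1-\cos(\lambda x))+(1-\cos(\lambda y))-(1-\cos(\lambda(x-y)))=\operatorname{Re}\bigl((1-e^{i\lambda x})(1-e^{-i\lambda y})\bigr),
\]
to rewrite
\[
K(x,y)=\operatorname{Re}\int_{\mathbb{R}}\frac{(1-e^{i\lambda x})(1-e^{-i\lambda y})}{\lambda^{2}}\,d\mu(\lambda).
\]
Positive definiteness then follows because for any finite choice $\{c_{j}\}\subset\mathbb{R}$, $\{x_{j}\}\subset\mathbb{R}$,
\[
\sum_{j,k}c_{j}c_{k}K(x_{j},x_{k})=\int_{\mathbb{R}}\Bigl|\sum_{j}c_{j}\,\frac{1-e^{i\lambda x_{j}}}{\lambda}\Bigr|^{2}d\mu(\lambda)\geq 0.
\]
(For complex $c_{j}$ one uses the same identity with $\bar c_k$ in place of $c_k$, noting $K$ is real symmetric.)

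With $K$ positive definite (and real symmetric), the Kolmogorov consistency theorem produces a centered real Gaussian process $X^{(\mu)}=\{X_{x}^{(\mu)}\}_{x\in\mathbb{R}}$ with $\mathbb{E}(X_{x}^{(\mu)}X_{y}^{(\mu)})=K(x,y)$; uniqueness is up to equivalence of law and is immediate from the fact that finite-dimensional distributions of a centered Gaussian are determined by the covariance. The stated variance formula is $K(x,x)=r^{(\mu)}(|x|)$, and the increment identity is the polarization
\[
\mathbb{E}\bigl(|X_{x}^{(\mu)}-X_{y}^{(\mu)}|^{2}\bigr)=K(x,x)+K(y,y)-2K(x,y)=r^{(\mu)}(|x-y|),
\]
which also confirms that $X^{(\mu)}$ has stationary increments. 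The only subtle point in the argument is keeping track of the integrability of $(1-e^{i\lambda x})/\lambda$ as a vector-valued function into $L^{2}(\mu)$ near $\lambda=0$; this is routine given the bound $|1-e^{i\lambda x}|\leq|\lambda x|$, and is the same mechanism by which Theorem \ref{thm:p1}(c) accommodates the $(1+\lambda^{2})^{-1}$ weight. No further machinery beyond Theorem \ref{thm:p1} and Kolmogorov extension is required.
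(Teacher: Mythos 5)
Your argument is correct, and it reaches the same spectral formula as the paper but by a different construction. The paper's proof is essentially a pointer: it sets $\varphi_{x}=\pm\chi_{[0,x]}$, notes that $|\widehat{\varphi_{x}}(\lambda)|^{2}=|1-e^{i\lambda x}|^{2}/\lambda^{2}$, and applies the isometric transform $T_{f}$ of Theorem \ref{thm:p1}(b), so that $X^{(\mu)}_{x}$ is realized as the canonical Gaussian field on $\mathscr{H}_{f}\cong L^{2}(\mu)$ evaluated at the concrete vector $\varphi_{x}\ast f$; the covariance identities then fall out of the isometry $\mathbb{E}(X_{x}\overline{X_{y}})=\langle\widehat{\varphi_{x}},\widehat{\varphi_{y}}\rangle_{L^{2}(\mu)}$, with the remaining details delegated to the cited reference on stationary-increment processes. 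You instead verify directly that the variogram kernel $K(x,y)=\tfrac{1}{2}(r(|x|)+r(|y|)-r(|x-y|))$ equals $\operatorname{Re}\int(1-e^{i\lambda x})(1-e^{-i\lambda y})\lambda^{-2}\,d\mu$, conclude positive definiteness from the square $\int|\sum_{j}c_{j}(1-e^{i\lambda x_{j}})/\lambda|^{2}d\mu\geq0$, and invoke Kolmogorov extension; the key computation is the same, but the process is built abstractly from its covariance rather than as the image of explicit RKHS vectors. Your route is more self-contained and quietly sidesteps a technical wrinkle in the paper's version, namely that $\chi_{[0,x]}$ is not in $\mathcal{S}$ (or $C_{c}^{\infty}$), so applying Theorem \ref{thm:p1}(b) to it literally requires an approximation argument; what it gives up is the explicit realization $X_{x}=W(\varphi_{x}\ast f)$ tying the process to the transform $T_{f}$, which is the point the paper wants to emphasize. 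Two small remarks: the finiteness of $r^{(\mu)}$ genuinely needs the $M=1$ temperedness condition $\int d\mu(\lambda)/(1+\lambda^{2})<\infty$ (not the weaker $\lambda^{2M}$ version appearing elsewhere in the paper), as you correctly isolate; and at $\lambda=0$ the integrand should be read as its continuous extension $|x|^{2}$, which you implicitly do.
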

\begin{proof}
This family of stationary increment Gaussian processes were studied
in \cite{MR2793121}, and so we omit details here. The idea is to
apply the transform $T_{\mu}$ from Theorem \ref{thm:p1} (b) to the
associated Gaussian process. 

Setting $\varphi_{x}=\varphi=\begin{cases}
\chi_{\left[0,x\right]}\left(\cdot\right) & \text{if }x\geq0\\
-\chi_{\left[0,x\right]}\left(\cdot\right) & \text{if }x<0
\end{cases}$ , we get 
\begin{align*}
r^{\left(\mu\right)}\left(x\right) & =\int_{\mathbb{R}}\left|\widehat{\varphi}\left(\lambda\right)\right|^{2}d\mu\left(\lambda\right)\quad(\text{see Thm. \ref{thm:p1} (b)})\\
 & =\int_{\mathbb{R}}\left|1-e^{i\lambda x}\right|^{2}\frac{d\mu\left(\lambda\right)}{\lambda^{2}},\;x\in\mathbb{R},
\end{align*}
as claimed. 
\end{proof}

\section{Dirac Combs, and related Examples}

In Theorem \ref{thm:p1}, we made a distinction between the two cases:
that of (i) continuous p.d. functions, and (ii) the case of positive
definite tempered distributions. The two cases are connected with
the studies of Aronszajn \cite{MR0051437}, in case (i); and of Schwartz
\cite{MR0179587}, in case (ii). In the present section, we illustrate
this distinction in detail. 

To review the conclusions in Theorem \ref{thm:p1}, in case (i), the
Hilbert completion $\mathscr{H}_{f}$ of $\left\{ \varphi\ast f\mathrel{;}\varphi\in C_{c}\left(\mathbb{R}\right)\right\} $
in the pre-Hilbert inner product 
\begin{equation}
\int_{\mathbb{R}}\int_{\mathbb{R}}\varphi\left(x\right)\overline{\varphi\left(y\right)}f\left(x-y\right)dxdy
\end{equation}
is a \emph{reproducing kernel Hilbert space} (RKHS) in the sense of
Aronszajn. The reason is that, for all $x\in\mathbb{R}$, the function
$f\left(\cdot-x\right)$ is in $\mathscr{H}_{f}$, and 
\begin{equation}
\left\langle f\left(\cdot-x\right),h\right\rangle _{\mathscr{H}_{f}}=h\left(x\right),\;\forall x\in\mathbb{R},\:\forall h\in\mathscr{H}_{f};
\end{equation}
i.e., $\mathscr{H}_{f}$ satisfies the Aronszajn reproducing property.
This is not so in case (ii), but nonetheless, we shall get a reproducing
property in the \emph{measure theoretic} setting from the paper \cite{MR0179587}
of L. Schwartz.

The above conclusions are made precise in the following: 
\begin{prop}[The Dirac comb \cite{MR3456185,MR3494188,MR3004461}]
\label{prop:Dirac}Set 
\begin{equation}
\mu:=\sum_{n\in\mathbb{Z}}\delta_{n}\label{eq:d3}
\end{equation}
where $\delta_{n}$ in (\ref{eq:d3}) denotes the Dirac distribution.
Then $f=\widehat{\mu}$ is the tempered Schwartz distribution, written
formally as 
\begin{equation}
f\left(x\right)=\sum_{n\in\mathbb{Z}}e^{inx},\;x\in\mathbb{R}.\label{eq:d4}
\end{equation}
In this case the Hilbert completion $\mathscr{H}_{f}$ from Theorem
\ref{thm:p1} is the Hilbert space of all $2\pi$-periodic functions
$h$ on $\mathbb{R}$ subject to the condition 
\begin{equation}
\left\Vert h\right\Vert _{\mathscr{H}_{f}}^{2}:=\frac{1}{2\pi}\int_{-\pi}^{\pi}\left|h\left(x\right)\right|^{2}dx<\infty.\label{eq:d5}
\end{equation}
\end{prop}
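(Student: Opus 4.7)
The plan is to apply Theorem \ref{thm:p1} to the discrete measure $\mu=\sum_{n\in\mathbb{Z}}\delta_{n}$ and then reinterpret the target Hilbert space concretely as a space of Fourier series. First I would check that $\mu$ is a tempered positive Borel measure (immediate, since $\sum_{n}(1+n^{2})^{-1}<\infty$), so by Bochner--Schwartz the distribution $f:=\widehat{\mu}$, formally (\ref{eq:d4}), is a positive definite tempered distribution. Theorem \ref{thm:p1}(b) then yields an isometry $T_{f}:\mathscr{H}_{f}\to L^{2}(\mathbb{R},\mathscr{B},\mu)$, $T_{f}(\varphi\ast f)=\widehat{\varphi}$. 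Since $\mu$ is counting measure on $\mathbb{Z}$, the target is canonically $\ell^{2}(\mathbb{Z})$, and
\[
\|\varphi\ast f\|_{\mathscr{H}_{f}}^{2}=\sum_{n\in\mathbb{Z}}|\widehat{\varphi}(n)|^{2},\qquad\varphi\in\mathcal{S}.
\]

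Next I would realize $\varphi\ast f$ as a classical smooth $2\pi$-periodic function via Poisson summation. In the paper's convention $\widehat{\mu}(x)=\int e^{i\lambda x}d\mu(\lambda)$, Poisson summation gives $f=2\pi\sum_{k\in\mathbb{Z}}\delta_{2\pi k}$ as tempered distributions, so convolving with $\varphi\in\mathcal{S}$ produces the periodization
\[
(\varphi\ast f)(x)=2\pi\sum_{k\in\mathbb{Z}}\varphi(x-2\pi k)=\sum_{n\in\mathbb{Z}}\widehat{\varphi}(-n)e^{inx},
\]
the second equality being the Fourier expansion of the $2\pi$-periodic function (its $n$th Fourier coefficient is $\widehat{\varphi}(-n)$). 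Parseval's theorem for Fourier series now yields
\[
\frac{1}{2\pi}\int_{-\pi}^{\pi}|(\varphi\ast f)(x)|^{2}\,dx=\sum_{n\in\mathbb{Z}}|\widehat{\varphi}(-n)|^{2}=\sum_{n\in\mathbb{Z}}|\widehat{\varphi}(n)|^{2}=\|\varphi\ast f\|_{\mathscr{H}_{f}}^{2},
\]
exhibiting an explicit isometry from $\{\varphi\ast f:\varphi\in\mathcal{S}\}$ into the space (\ref{eq:d5}).

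For density, it suffices to realize every finitely supported sequence $(c_{n})_{|n|\leq N}\in\ell^{2}(\mathbb{Z})$ as $(\widehat{\varphi}(-n))_{n}$ for some $\varphi\in\mathcal{S}$; this is done by choosing $\widehat{\varphi}$ to be a smooth bump supported in $(-N-\tfrac14,N+\tfrac14)$ interpolating the prescribed values at $-N,\dots,N$, and then inverting the Fourier transform. Since trigonometric polynomials are dense in $L^{2}([-\pi,\pi],dx/2\pi)$, combining density with the isometry above identifies $\mathscr{H}_{f}$ with the $2\pi$-periodic $L^{2}$-space in (\ref{eq:d5}). The main obstacle is the bookkeeping: one must apply Poisson summation in exactly the paper's Fourier convention so that no spurious factor of $2\pi$ corrupts the match between $\sum_{n}|\widehat{\varphi}(n)|^{2}$ and $\frac{1}{2\pi}\int_{-\pi}^{\pi}|\varphi\ast f|^{2}dx$; once conventions are aligned, the proof reduces to Parseval for Fourier series applied to an explicit periodization formula.
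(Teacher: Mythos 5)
Your proposal is correct and follows essentially the same route as the paper's proof: both reduce to the identity $\left\Vert \varphi\ast f\right\Vert _{\mathscr{H}_{f}}^{2}=\sum_{n}|\widehat{\varphi}\left(n\right)|^{2}$, the explicit Fourier-series form of $\varphi\ast f$ (the paper gets $\sum_{n}\widehat{\varphi}\left(n\right)e^{-inx}$ directly from (\ref{eq:d4}), you get the equivalent $\sum_{n}\widehat{\varphi}\left(-n\right)e^{inx}$ via Poisson summation), and Parseval for $2\pi$-periodic functions. Your added density argument via smooth bumps interpolating finitely supported sequences is a detail the paper leaves implicit, but it is the right way to close the identification of $\mathscr{H}_{f}$ with the space in (\ref{eq:d5}).
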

\begin{proof}
A positive measure $\mu$ on $\mathbb{R}$ is said to be \emph{tempered}
iff $\exists M\in\mathbb{N}$ such that 
\begin{equation}
\int_{\mathbb{R}}\frac{d\mu\left(\lambda\right)}{1+\lambda^{2M}}<\infty.\label{eq:d6}
\end{equation}
The measure $\mu$ in (\ref{eq:d3}) is clearly tempered, and in particular
it is $\sigma$-finite. Specifically if $B\in\mathscr{B}_{\mathbb{R}}$
(the Borel $\sigma$-algebra), then 
\begin{equation}
\mu\left(B\right)=\#\left(B\cap\mathbb{Z}\right).
\end{equation}
For $M$ in (\ref{eq:d6}) we may take $M=1$. 

We now turn to the Hilbert completion $\mathscr{H}_{f}$ where $f$
is as in (\ref{eq:d4}). For all test-function $\varphi\in\mathcal{S}$,
we have: 
\begin{equation}
\left(\varphi\ast f\right)\left(x\right)=\sum_{n\in\mathbb{Z}}\widehat{\varphi}\left(n\right)e^{-inx}\label{eq:d8}
\end{equation}
where the interpretation of (\ref{eq:d8}) is in the sense of tempered
Schwartz distributions. Moreover, 
\begin{equation}
\int_{\mathbb{R}}\int_{\mathbb{R}}\varphi\left(x\right)\overline{\varphi\left(y\right)}f\left(x-y\right)dxdy=\sum_{n\in\mathbb{Z}}\left|\widehat{\varphi}\left(n\right)\right|^{2}.\label{eq:d9}
\end{equation}

Now, combining (\ref{eq:d8}) and (\ref{eq:d9}), we get that $\mathscr{H}_{f}$
is the Hilbert space described before (\ref{eq:d5}). To see this,
we apply the Plancherel-Fourier theorem, i.e., for $\forall\left(c_{n}\right)\in l^{2}$,
the function $h\left(x\right)=\sum_{n\in\mathbb{Z}}c_{n}e^{inx}$
is well defined, and 
\begin{equation}
\frac{1}{2\pi}\int_{-\pi}^{\pi}\left|h\left(x\right)\right|^{2}dx=\sum_{n\in\mathbb{Z}}\left|c_{n}\right|^{2}.
\end{equation}
Comparing now with (\ref{eq:d8}), the desired conclusion follows.
\end{proof}
\begin{rem}
By the Poisson summation formula, (\ref{eq:d4}) can also be written
as 
\[
f\left(x\right)=\sum_{n\in\mathbb{Z}}e^{inx}=2\pi\sum_{n\in\mathbb{Z}}\delta\left(x-2\pi n\right).
\]
\end{rem}

\subsection{The case of IFS-Cantor measures}

Let $\nu=\nu_{4}$ be the scale 4-Cantor fractal measure (see \cite{MR1215311,MR1655831})
specified by the IFS-identity: 
\begin{equation}
\frac{1}{2}\int\left(h\left(\frac{x}{4}\right)+h\left(\frac{x+2}{4}\right)\right)d\nu_{4}\left(x\right)=\int h\left(x\right)d\nu_{4}\left(x\right)\label{eq:d11}
\end{equation}
for all $h$. Introduce the transform 
\begin{equation}
\widehat{\nu}\left(\xi\right):=\int_{\mathbb{R}}e^{i\xi x}d\nu\left(x\right),
\end{equation}
and (\ref{eq:d11}) is equivalent to 
\begin{equation}
\widehat{\nu}_{4}\left(\xi\right)=\frac{1+e^{i\xi/2}}{2}\widehat{\nu}_{4}\left(\xi/4\right),\;\forall\xi\in\mathbb{R}.
\end{equation}
Note that, as a consequence, the support of this cantor measure $\nu_{4}$
is then precisely the scale-4 Cantor set from Fig \ref{fig:cantor}
above. It was shown by Jorgensen-Pedersen \cite{MR1655831} that $L^{2}\left(\nu_{4}\right)$
has an orthonormal basis (ONB) of functions $e_{\lambda}\left(x\right):=e^{i\lambda x}$.
One may take for example 
\begin{align}
\Lambda_{4} & :=\left\{ 0,1,4,5,16,17,20,21,64,65,\cdots\right\} \nonumber \\
 & =\left\{ \sum\nolimits _{0}^{\text{finite}}b_{j}4^{j}\mathrel{;}b_{j}\in\left\{ 0,1\right\} \right\} .\label{eq:d14}
\end{align}

\begin{figure}
\includegraphics[width=0.4\textwidth]{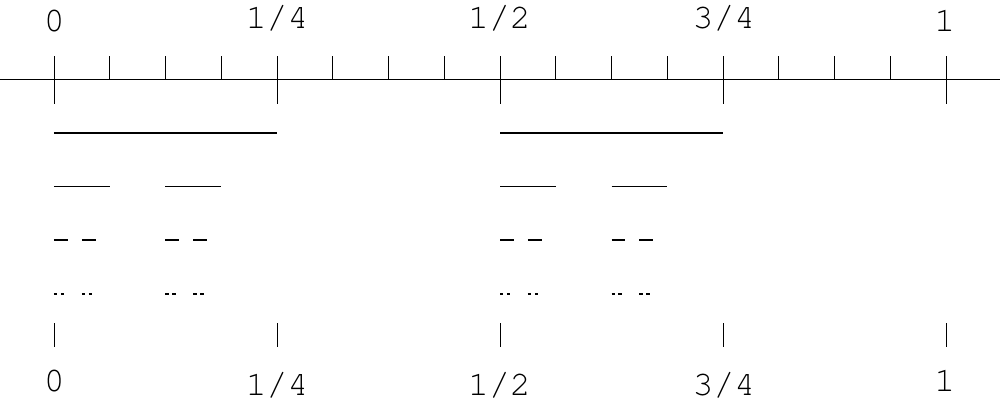}\caption{\label{fig:cantor}The $\frac{1}{4}$-Cantor set.}

\end{figure}

While $\left\{ e_{\lambda}\mathrel{;}\lambda\in\Lambda_{4}\right\} $
forms an ONB in $L^{2}\left(\nu_{4}\right)$, we say that $\left(\nu_{4},\Lambda_{4}\right)$
is a \emph{spectral pair}, it should be stressed that many Cantor
measures $\nu$ do \emph{not} allow ONBs of the form $\left\{ e_{\lambda}\mathrel{;}\lambda\in\Lambda\right\} $
for \emph{any} subsets $\Lambda$ of $\mathbb{R}$; for example $\nu_{3}$
is the opposite extreme: Jorgensen \& Pedersen proved that $L^{2}\left(\nu_{3}\right)$
does not admit more than two orthogonal functions of the form $e_{\lambda}\left(x\right)=e^{i\lambda x}$,
$\lambda\in\mathbb{R}$. By $\nu_{3}$, we mean the unique Borel probability
measure satisfying 
\begin{equation}
\frac{1}{2}\int\left(h\left(\frac{x}{3}\right)+h\left(\frac{x+2}{3}\right)\right)d\nu_{3}\left(x\right)=\int h\left(x\right)d\nu_{3}\left(x\right),
\end{equation}
for all $h$, compare (\ref{eq:d11}) with above. 

Using now the same ideas from the present paper, we get the following: 
\begin{prop}
Let $\left(\nu_{4},\Lambda_{4}\right)$ be as above; see (\ref{eq:d11})-(\ref{eq:d14}),
and set 
\[
\mu_{4}:=\sum_{\lambda\in\Lambda_{4}}\delta_{\lambda},
\]
and 
\[
f_{4}\left(x\right):=\sum_{\lambda\in\Lambda_{4}}e^{i\lambda x},\;x\in\mathbb{R},
\]
realized as a tempered p.d. distribution. Let $\mathscr{H}_{f_{4}}$
be the associated Hilbert space from Theorem \ref{thm:p1}. Then there
is a natural isometric isomorphism between the two Hilbert spaces
$\mathscr{H}_{f_{4}}$ and $L^{2}\left(\nu_{4}\right)$. 
\end{prop}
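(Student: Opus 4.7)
The plan is to exhibit the isomorphism as a composition of three natural unitaries. First, I would invoke Theorem \ref{thm:p1}(b) applied to $f_{4}=\widehat{\mu_{4}}$. One verifies that $\mu_{4}=\sum_{\lambda\in\Lambda_{4}}\delta_{\lambda}$ is a tempered positive measure: since the enumeration of $\Lambda_{4}$ in (\ref{eq:d14}) is lacunary and $\Lambda_{4}\cap[0,N]$ has cardinality $O(N^{1/2})$, we have $\int(1+\lambda^{2})^{-1}d\mu_{4}(\lambda)<\infty$, so $f_{4}$ is a positive definite tempered Schwartz distribution. Theorem \ref{thm:p1}(b) then yields the isometry
\[
T_{f_{4}}:\mathscr{H}_{f_{4}}\longrightarrow L^{2}(\mathbb{R},\mu_{4}),\qquad \varphi\ast f_{4}\longmapsto\widehat{\varphi},\;\varphi\in\mathcal{S}.
\]
Because $\mu_{4}$ is purely atomic with unit atoms on $\Lambda_{4}$, the evaluation map $R:g\mapsto(g(\lambda))_{\lambda\in\Lambda_{4}}$ is a canonical unitary $L^{2}(\mathbb{R},\mu_{4})\to\ell^{2}(\Lambda_{4})$. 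The Jorgensen--Pedersen spectral-pair theorem recalled above asserts that $\{e_{\lambda}\}_{\lambda\in\Lambda_{4}}$ is an orthonormal basis of $L^{2}(\nu_{4})$, so that
\[
W:\ell^{2}(\Lambda_{4})\longrightarrow L^{2}(\nu_{4}),\qquad (c_{\lambda})\longmapsto\sum_{\lambda\in\Lambda_{4}}c_{\lambda}e_{\lambda},
\]
is a unitary isomorphism. I would then define $U:=W\circ R\circ T_{f_{4}}$; explicitly, $U(\varphi\ast f_{4})=\sum_{\lambda\in\Lambda_{4}}\widehat{\varphi}(\lambda)\,e_{\lambda}\in L^{2}(\nu_{4})$.

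The substantive step is to verify surjectivity of $U$, which (since $W$ and $R$ are bijections) reduces to showing that the range of $T_{f_{4}}$ is dense in $L^{2}(\mathbb{R},\mu_{4})$; equivalently, that the sequences $\bigl(\widehat{\varphi}(\lambda)\bigr)_{\lambda\in\Lambda_{4}}$, $\varphi\in\mathcal{S}$, are dense in $\ell^{2}(\Lambda_{4})$. I would prove this by a Paley--Wiener interpolation: given a finite $F\subset\Lambda_{4}$ and prescribed values $(c_{\lambda})_{\lambda\in F}$, the separation of the points of $F$ allows one to construct $\varphi\in\mathcal{S}$ as a finite superposition of modulated, narrowly bandlimited bumps whose Fourier transform matches $(c_{\lambda})$ on $F$ and has arbitrarily small $\ell^{2}$-mass on $\Lambda_{4}\setminus F$. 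Since the finitely supported sequences are dense in $\ell^{2}(\Lambda_{4})$, density of the range follows.

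The main obstacle I anticipate is not conceptual but rather the bookkeeping with distribution pairings. Concretely, one must verify both the Fourier identity $f_{4}=\widehat{\mu_{4}}$ in $\mathcal{S}'(\mathbb{R})$ and the norm formula $\|\varphi\ast f_{4}\|_{\mathscr{H}_{f_{4}}}^{2}=\langle f_{4},\varphi\ast\overline{\varphi}\rangle=\sum_{\lambda\in\Lambda_{4}}|\widehat{\varphi}(\lambda)|^{2}$. Both are handled exactly as in the Dirac comb case of Proposition \ref{prop:Dirac}: one writes the pairing against the test function, interchanges it with the absolutely convergent sum over $\Lambda_{4}$ (using that $\varphi\in\mathcal{S}$ forces rapid decay of $\widehat{\varphi}$ while $\Lambda_{4}$ is sparse), and applies Parseval. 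Once these identities are in place, the three unitaries compose to the claimed isometric isomorphism, which moreover intertwines the translation group $\{U_{t}\}_{t\in\mathbb{R}}$ on $\mathscr{H}_{f_{4}}$ with the diagonal modulation $e_{\lambda}\mapsto e^{it\lambda}e_{\lambda}$ on $L^{2}(\nu_{4})$, making the isomorphism canonical.
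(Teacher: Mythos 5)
Your proof is correct and follows essentially the same route as the paper: compose the isometry $T_{f_{4}}:\mathscr{H}_{f_{4}}\to L^{2}(\mathbb{R},\mu_{4})$ from Theorem \ref{thm:p1}(b) with the identification $L^{2}(\mu_{4})\cong\ell^{2}(\Lambda_{4})$ (the measure being purely atomic with unit masses) and the Jorgensen--Pedersen ONB unitary onto $L^{2}(\nu_{4})$. The paper's own proof merely refers back to Proposition \ref{prop:Dirac} plus the ONB fact; your added verifications --- temperedness of $\mu_{4}$ (immediate since $\Lambda_{4}\subset\mathbb{Z}_{\geq0}$) and surjectivity of $T_{f_{4}}$ via interpolation of $\widehat{\varphi}$ on the separated set $\Lambda_{4}$ --- are sound and supply details the paper leaves implicit.
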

\begin{proof}
The details are the same as those of the proof of Proposition \ref{prop:Dirac}.
The key step is use of the fact from \cite{MR1655831} that $\left\{ e_{\lambda}\mathrel{;}\lambda\in\Lambda_{4}\right\} $
is an ONB in the Hilbert space $L^{2}\left(\nu_{4}\right)$ defined
from the Cantor measure $\nu_{4}$.
\end{proof}

\section{\label{sec:cor}Correspondences and Applications}
\begin{flushleft}
\par\end{flushleft}

\noindent\begin{minipage}[t]{1\columnwidth}%
\begin{minipage}[t]{0.48\columnwidth}%
\textbf{Continuous p.d. functions on $\mathbb{R}$}
\begin{lem*}
Let $f$ be a continuous function on $\mathbb{R}$. Then the following
are equivalent:
\begin{enumerate}[leftmargin=18pt]
\item $f$ is p.d., i.e., $\forall\varphi\in C_{c}\left(\mathbb{R}\right)$,
we have 
\begin{equation}
\int_{\mathbb{R}}\int_{\mathbb{R}}\varphi\left(x\right)\overline{\varphi\left(y\right)}f\left(x-y\right)dxdy\geq0.
\end{equation}
\item $\forall\left\{ x_{j}\right\} _{j=1}^{n}\subset\mathbb{R}$, $\forall\left\{ c_{j}\right\} _{j=1}^{n}\subset\mathbb{C}$,
and $\forall n\in\mathbb{N}$, we have 
\begin{equation}
\sum_{j=1}^{n}\sum_{k=1}^{n}c_{j}\overline{c}_{k}f\left(x_{j}-x_{k}\right)\geq0.
\end{equation}
 
\end{enumerate}
\end{lem*}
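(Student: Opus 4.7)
The plan is to prove the two implications separately, with the continuity of $f$ making the passage between the discrete formulation and the integral formulation routine. In one direction I will use Riemann-sum approximation of the double integral; in the other I will use the mollifiers $\varphi_{n,x}$ introduced in Lemma \ref{lem:dense} to approximate the point masses.

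For (2) $\Rightarrow$ (1): fix $\varphi\in C_{c}(\mathbb{R})$ with support in a compact set $K$. Partition $K$ by a mesh of width $\Delta>0$ with sample points $\{x_{j}\}_{j=1}^{n}$ and set $c_{j}:=\varphi(x_{j})\Delta$. By hypothesis (2),
\[
0\leq \sum_{j,k}c_{j}\overline{c_{k}}\,f(x_{j}-x_{k})=\sum_{j,k}\varphi(x_{j})\overline{\varphi(x_{k})}\,f(x_{j}-x_{k})\,\Delta^{2}.
\]
The right-hand side is a Riemann sum for $\int_{\mathbb{R}}\int_{\mathbb{R}}\varphi(x)\overline{\varphi(y)}f(x-y)\,dx\,dy$, and it converges to this double integral as $\Delta\to 0$ by the uniform continuity of $(x,y)\mapsto \varphi(x)\overline{\varphi(y)}f(x-y)$ on the compact set $K\times K$. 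Passing to the limit preserves the inequality, so (1) holds.

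For (1) $\Rightarrow$ (2): given finite $\{x_{j}\}_{j=1}^{n}\subset\mathbb{R}$ and $\{c_{j}\}\subset\mathbb{C}$, let $\varphi_{n,x}$ be the mollifiers from Lemma \ref{lem:dense} with $\varphi_{n,x}\to\delta_{x}$ as $n\to\infty$, and set
\[
\psi_{m}:=\sum_{j=1}^{n}c_{j}\,\varphi_{m,x_{j}}\;\in\;C_{c}(\mathbb{R}).
\]
Hypothesis (1) applied to $\psi_{m}$ yields
\[
0\leq \int_{\mathbb{R}}\int_{\mathbb{R}}\psi_{m}(x)\overline{\psi_{m}(y)}\,f(x-y)\,dx\,dy=\sum_{j,k}c_{j}\overline{c_{k}}\,I_{m}(j,k),
\]
where $I_{m}(j,k):=\int\!\!\int \varphi_{m,x_{j}}(x)\,\varphi_{m,x_{k}}(y)\,f(x-y)\,dx\,dy$. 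Since $\varphi_{m,x_{k}}(\cdot)\to \delta_{x_{k}}$ and $f$ is continuous, the inner integral $y\mapsto \int \varphi_{m,x_{k}}(y)f(x-y)\,dy$ converges to $f(x-x_{k})$ uniformly for $x$ in a neighborhood of $x_{j}$; applying the same reasoning to the outer integral gives $I_{m}(j,k)\to f(x_{j}-x_{k})$ as $m\to\infty$. Because the sum is finite, we may pass to the limit to conclude $\sum_{j,k}c_{j}\overline{c_{k}}\,f(x_{j}-x_{k})\geq 0$, which is (2).

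The only subtlety is justifying the two limits: the Riemann-sum limit in the first direction follows from uniform continuity on a compact rectangle, and the mollifier limit in the second direction is a standard approximate-identity argument with finitely many terms, so no dominated-convergence machinery is needed. Continuity (and boundedness on the relevant compact sets) of $f$ is used in both directions, and this is precisely why the hypothesis ``$f$ continuous on $\mathbb{R}$'' appears in the statement.
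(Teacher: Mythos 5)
Your proof is correct and follows essentially the same route as the paper's (very terse) argument: Riemann-sum approximation for (2)$\Rightarrow$(1), and mollifier/approximate-identity approximation of the point masses for (1)$\Rightarrow$(2). Your version has the minor advantage of running the mollifier limit directly on the double integrals rather than through the $\mathscr{H}_{f}$-norm convergence $\left\Vert f_{\varphi_{n,x}}-f\left(\cdot-x\right)\right\Vert _{\mathscr{H}_{f}}\rightarrow0$ that the paper invokes, so it does not presuppose the RKHS construction (which itself requires positive definiteness).
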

\end{minipage}\hfill{}%
\begin{minipage}[t]{0.48\columnwidth}%
\textbf{p.d. tempered distributions on $\mathbb{R}$}
\begin{lem*}
Let $f$ be a tempered distribution on $\mathbb{R}$. Then $f$ is
p.d. if and only if 
\begin{equation}
\int_{\mathbb{R}}\int_{\mathbb{R}}\varphi\left(x\right)\overline{\varphi\left(y\right)}f\left(x-y\right)dxdy\geq0
\end{equation}
hold, for all $\varphi\in\mathcal{S}$, where $\mathcal{S}$ is the
\textup{Schwartz space}. 

Equivalently, 
\begin{equation}
\left\langle f\left(x-y\right),\varphi\otimes\overline{\varphi}\right\rangle \geq0,\;\forall\varphi\in\mathcal{S}.
\end{equation}
Here $\left\langle \cdot,\cdot\right\rangle $ denotes distribution
action. 
\end{lem*}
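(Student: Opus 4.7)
Both lemmas are the standard equivalence between the two guises of positive definiteness: the kernel-sum form and the integrated test-function form. I will treat the continuous case first, and then note that the distributional case reduces to definitions.

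For the continuous lemma, the strategy is matched approximation in two directions. To show (ii)$\Rightarrow$(i), fix $\varphi\in C_c(\mathbb{R})$ with compact support $K$, and discretize the double integral by a Riemann sum on a uniform partition of $K$ with step $\Delta$: with sample points $x_j$ and weights $c_j:=\varphi(x_j)\Delta$, the sum $\sum_{j,k}c_j\overline{c_k}f(x_j-x_k)$ is nonnegative by (ii), and it converges to $\int_{\mathbb{R}}\int_{\mathbb{R}}\varphi(x)\overline{\varphi(y)}f(x-y)\,dx\,dy$ by uniform continuity of $f$ on $K-K$ and of $\varphi$ on $K$. For the converse (i)$\Rightarrow$(ii), use mollification: given a finite system $\{(x_j,c_j)\}$, fix a standard approximate identity $\eta_\varepsilon\geq 0$ with $\int\eta_\varepsilon=1$, and form $\varphi_\varepsilon:=\sum_j c_j\,\eta_\varepsilon(\cdot-x_j)\in C_c(\mathbb{R})$. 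Applying (i) to $\varphi_\varepsilon$ and expanding yields
\[
0 \leq \sum_{j,k} c_j\overline{c_k}\int_{\mathbb{R}}\int_{\mathbb{R}}\eta_\varepsilon(x-x_j)\,\eta_\varepsilon(y-x_k)\,f(x-y)\,dx\,dy,
\]
and each inner double integral tends to $f(x_j-x_k)$ as $\varepsilon\to 0$ by continuity of $f$, so (ii) follows.

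For the tempered-distribution lemma, the two displayed conditions are literally the same once one interprets $f(x-y)$ as the pullback of $f\in\mathcal{S}'(\mathbb{R})$ under the surjective linear map $(x,y)\mapsto x-y$: this pullback is a well-defined element of $\mathcal{S}'(\mathbb{R}^2)$, and the ``double integral'' is then the pairing $\langle f(x-y),\varphi\otimes\overline{\varphi}\rangle$. An equivalent compact phrasing is $\langle f,\varphi*\widetilde{\overline{\varphi}}\rangle\geq 0$ with $\widetilde{\psi}(t):=\psi(-t)$, using that $\varphi*\widetilde{\overline{\varphi}}\in\mathcal{S}(\mathbb{R})$. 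The equivalence of the two forms is thus a definition-chasing exercise; note that there is no finite-sum analogue in this setting, because point evaluations $f(x_j-x_k)$ need not make sense when $f$ is merely a distribution.

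The only nontrivial points are the two limit passages in the continuous case, and both are routine: the Riemann-sum step needs uniform continuity of $f$ on the compact difference set $K-K$, while the mollifier step needs only continuity of $f$ at the finitely many points $x_j-x_k$. Both are guaranteed by the standing hypothesis that $f$ is continuous, so there is no genuine obstacle; the main thing to get right is the bookkeeping of the double-limit in the Riemann-sum step, which is handled by bounding the sum by $\|\varphi\|_\infty^2 \cdot \omega_f(\Delta) \cdot |K|^2$ where $\omega_f$ is the modulus of continuity of $f$ on $K-K$.
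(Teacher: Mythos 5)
Your proposal is correct and matches the paper's treatment: the paper states this lemma without proof precisely because, for a tempered distribution, positive definiteness in the sense of Schwartz \emph{is} the condition $\left\langle f,\varphi\ast\widetilde{\overline{\varphi}}\right\rangle \geq0$, and the two displayed forms are just notational variants of that pairing via the pullback of $f$ under $\left(x,y\right)\mapsto x-y$ — exactly your reading. Your supplementary Riemann-sum/mollifier argument for the continuous case likewise coincides with the paper's one-line proof sketch (``use Riemann integral approximation''); only note that your labels (i)/(ii) are swapped relative to the paper's numbering, which is purely cosmetic.
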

\end{minipage}%
\end{minipage}
\begin{center}
\textbf{}%
\noindent\begin{minipage}[t]{1\columnwidth}%
\begin{center}
\textbf{RKHS}
\par\end{center}
\begin{minipage}[t]{0.48\columnwidth}%
\textbf{Bochner's theorem.} 

$\exists!$ positive finite measure $\mu$ on $\mathbb{R}$ such that
\[
f\left(x\right)=\int_{\mathbb{R}}e^{ix\lambda}d\mu\left(\lambda\right).
\]
\end{minipage}\hfill{}%
\begin{minipage}[t]{0.48\columnwidth}%
\textbf{Bochner/Schwartz}

$\exists$ positive tempered measure $\mu$ on $\mathbb{R}$ such
that 
\[
f=\widehat{\mu}
\]
where $\widehat{\mu}$ is in the sense of distribution. %
\end{minipage}%
\end{minipage}
\par\end{center}

\begin{center}
\textbf{}%
\noindent\begin{minipage}[t]{1\columnwidth}%
\begin{minipage}[t]{0.48\columnwidth}%
Let $\mathscr{H}_{f}$ be the RKHS of $f$. 
\begin{itemize}[leftmargin=10pt]
\item Then 
\begin{equation}
\left\Vert \varphi\ast f\right\Vert _{\mathscr{H}_{f}}^{2}=\int_{\mathbb{R}}\left|\widehat{\varphi}\left(\lambda\right)\right|^{2}d\mu\left(\lambda\right)
\end{equation}
where $\widehat{\varphi}=$ the Fourier transform. 
\item $f$ admits the factorization 
\[
f\left(x_{1}-x_{2}\right)=\left\langle f\left(\cdot-x_{1}\right),f\left(\cdot-x_{2}\right)\right\rangle _{\mathscr{H}_{f}}
\]
$\forall x_{1},x_{2}\in\mathbb{R}$, with 

$\mathbb{R}\ni x\longrightarrow f\left(\cdot-x\right)\in\mathscr{H}_{f}$. 
\end{itemize}
\end{minipage}\hfill{}%
\begin{minipage}[t]{0.48\columnwidth}%
Let $\mathscr{H}_{f}$ denote the corresponding RKHS. 
\begin{itemize}[leftmargin=10pt]
\item For all $\varphi\in\mathcal{S}$, we have 
\begin{equation}
\left\Vert \varphi\ast f\right\Vert _{\mathscr{H}_{f}}^{2}=\left\langle f\left(x-y\right),\varphi\otimes\overline{\varphi}\right\rangle ,
\end{equation}
distribution action. 
\item $\mathcal{S}\ni\varphi\longmapsto\varphi\ast f\in\mathscr{H}_{f}$,
where 
\[
\left(\varphi\ast f\right)\left(\cdot\right)=\int\varphi\left(y\right)f\left(\cdot-y\right)dy.
\]
\end{itemize}
\end{minipage}%
\end{minipage}
\par\end{center}

\begin{center}
\textbf{}%
\noindent\begin{minipage}[t]{1\columnwidth}%
\begin{center}
\textbf{Applications}
\par\end{center}
\begin{minipage}[t]{0.48\columnwidth}%
Now applied to Bochner's theorem. 

Set $\mathscr{H}_{f}=$ RKHS of $f$, and $w_{0}=f\left(\cdot-0\right)$.
Then 
\[
U_{t}w_{0}=w_{t}=f\left(\cdot-t\right),\;t\in\mathbb{R}
\]
defines a strongly continuous unitary representation of $\mathbb{R}$. %
\end{minipage}\hfill{}%
\begin{minipage}[t]{0.48\columnwidth}%
On white noise space:
\[
\mathbb{E}\left(e^{i\left\langle \varphi,\cdot\right\rangle }\right)=e^{-\frac{1}{2}\int\left|\widehat{\varphi}\right|^{2}d\mu}
\]
where $\mathbb{E}\left(\cdots\right)=$ expectation w.r.t the Gaussian
path-space measure. 

(The proof for the special case when $f$ is assumed p.d. and continuous
carries over with some changes to the case when $f$ is a p.d. tempered
distribution.)%
\end{minipage}%
\end{minipage}
\par\end{center}
\begin{note*}
In both cases, we have the following representation for vectors in
the RKHS $\mathscr{H}_{f}$: 
\begin{equation}
\left\langle \varphi\ast f,\psi\ast f\right\rangle _{\mathscr{H}_{f}}=\left\langle \varphi\ast\overline{\psi},f\right\rangle ,\;\forall\varphi,\psi\in\mathcal{S};
\end{equation}
where  $\varphi\ast f:=$ the standard convolution w.r.t. Lebesgue
measure.
\end{note*}

\section{Unimodular groups}

Let $G$ be a locally compact group, and assume it is unimodular,
i.e., its Haar measure is both left and right invariant. By a theorem
of I.E. Segal, there is then a Plancherel theorem for the unitary
representations of $G$ (see \cite{MR0036765} and \cite{MR1043174,MR0396826,MR1187300}).
If $C^{*}\left(G\right)$ denotes the group algebra with convolution
product 
\begin{equation}
\left(\varphi\ast\psi\right)\left(x\right)=\int_{G}\varphi\left(y\right)\psi\left(y^{-1}x\right)dy,\label{eq:u1}
\end{equation}
where $\varphi$, $\psi$ are functions on $G$, and $dy$ denotes
the Haar measure. The $\ast$-operation in (\ref{eq:u1}) is 
\begin{equation}
\varphi^{*}\left(x\right)=\overline{\varphi\left(x^{-1}\right)},\;x\in G.\label{eq:u2}
\end{equation}
Then $C^{*}\left(G\right)$ is the $C^{*}$-completion of this $*$-algebra.

Note that since $G$ is assumed unimodular, we need not include the
modular function $\Delta$ in the definition (\ref{eq:u2}). By general
theory, it is known that the set of equivalence classes of irreducible
unitary representations of $G$ is then in bijective correspondence
with the set $P\left(G\right)$ of pure states of $C^{*}\left(G\right)$. 
\begin{lem}
(a) Let $G$ be a unimodular (locally compact) group, and let $f$
be a continuous positive definite function on $G$. Let $\mathscr{H}_{f}$
be the corresponding reproducing kernel Hilbert space (RKHS) If $\pi$
is an irreducible unitary representation of $G$, we denote by $\lambda_{\pi}$
the corresponding state. More precisely, 
\begin{equation}
\lambda_{\pi}\left(x\right)=\left\langle v,\pi\left(x\right)v\right\rangle _{\mathscr{H}_{\pi}},\;x\in G\label{eq:u3}
\end{equation}
defines a pure state, $\lambda_{\pi}\in P\left(G\right)$. 

(b) Given $f$ p.d. and continuous as above, there is a unique Borel
measure $\mu=\mu_{f}$ concentrated on $P\left(G\right)$ such that
\begin{equation}
\left\Vert \varphi\ast f\right\Vert _{\mathscr{H}_{f}}^{2}=\int_{P\left(G\right)}\left|\lambda_{\pi}\left(\varphi\right)\right|^{2}d\mu\left(\lambda_{\pi}\right).\label{eq:u4}
\end{equation}
\end{lem}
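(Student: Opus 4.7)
The plan for (a) is a direct application of GNS. Given an irreducible unitary representation $\pi$ on $\mathscr{H}_\pi$ and a unit vector $v$, the matrix coefficient $\lambda_\pi(x) = \langle v, \pi(x) v\rangle_{\mathscr{H}_\pi}$ is continuous and positive definite, since
\[
\sum\nolimits_{i,j} c_i \overline{c}_j \lambda_\pi(x_j^{-1} x_i) = \Big\| \sum\nolimits_i \overline{c}_i \pi(x_i)v \Big\|_{\mathscr{H}_\pi}^{2} \geq 0.
\]
Integration against $\varphi \in C_c(G)$ extends $\lambda_\pi$ to a positive linear functional on $C^*(G)$. Purity then follows from the standard criterion: irreducibility of $\pi$ is equivalent to the vector functional $\lambda_\pi$ being an extreme point of the state space of $C^*(G)$.

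For (b), the plan is to recast $f$ as a state on $C^*(G)$ and then apply a Choquet-type integral decomposition. First, define $\omega_f$ on $C^*(G)$ by $\omega_f(\varphi^* \ast \varphi) := \|\varphi \ast f\|_{\mathscr{H}_f}^{2}$ for $\varphi \in C_c(G)$ (and extend by polarization and continuity). Positivity and boundedness come directly from the p.d. property of $f$ together with \eqref{eq:u1}--\eqref{eq:u2}. The associated GNS datum $(\pi_f, \mathscr{H}_f, v_0)$, with $v_0 = f(\cdot - e)$ the canonical cyclic vector, is precisely the RKHS already in play, with $\pi_f$ acting by the group analogue of the unitary representation $\{U_t\}$ discussed in \prettyref{sec:para}.

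Next, apply Choquet's theorem (in the non-metrizable regime, the Bishop--de Leeuw version) to the weak-$*$ compact convex state space of $C^*(G)$: since its extreme boundary is exactly $P(G)$, one obtains a probability measure $\mu_f$ concentrated on $P(G)$ with $\omega_f(a) = \int_{P(G)} \lambda(a)\,d\mu_f(\lambda)$ for all $a \in C^*(G)$. Specializing to $a = \varphi^* \ast \varphi$ and using $\lambda_\pi(\varphi^* \ast \varphi) = \langle v_\pi, \pi(\varphi)^*\pi(\varphi) v_\pi\rangle = \|\pi(\varphi)v_\pi\|^2$ in the GNS of each pure state, one identifies $\omega_f(\varphi^* \ast \varphi)$ with the integrand on the right-hand side of \eqref{eq:u4}; in the abelian reduction the identity $\|\pi(\varphi)v_\pi\|^2 = |\lambda_\pi(\varphi)|^2$ is automatic because $\mathscr{H}_\pi$ is one-dimensional, recovering the Bochner case.

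The main obstacle will be uniqueness of $\mu_f$ and the clean identification of the integrand in \eqref{eq:u4} in the non-abelian setting: the full state space of $C^*(G)$ need not be a Choquet simplex, and for a higher-dimensional irreducible $\pi$ the quantity $|\lambda_\pi(\varphi)|^2$ is only the ``diagonal'' piece of $\|\pi(\varphi) v_\pi\|^2$. The natural remedy is to invoke Segal's Plancherel theorem for unimodular $G$ \cite{MR0036765,MR1043174}, which gives a canonical direct-integral decomposition over $P(G)$ and pins down $\mu_f$ as the spectral measure of the cyclic vector $v_0 \in \mathscr{H}_f$ under $\pi_f$ --- the group analogue of the projection-valued measure construction \eqref{eq:a4}. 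This spectral-measure interpretation is the step that does the real work.
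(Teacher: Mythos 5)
Your proposal is correct in substance and ends up where the paper does, but it gets there by a different primary route. The paper never passes through Choquet theory: it introduces the unitary representation $U_{x}\left(\varphi\ast f\right)=\varphi\left(x^{-1}\cdot\right)\ast f$ of $G$ on $\mathscr{H}_{f}$ and invokes Segal's Plancherel theorem directly, writing $U=\int_{P\left(G\right)}\pi\,d\mu\left(\lambda_{\pi}\right)$ as a direct integral and reading off \eqref{eq:u4} as the group analogue of the spectral computation \eqref{eq:a4}--\eqref{eq:a5} from the Abelian case in Section \ref{sec:para}; the measurability of $P\left(G\right)$ is handled by citing Phelps, which is exactly the issue your Bishop--de Leeuw step is meant to absorb. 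Your Choquet/Bishop--de Leeuw decomposition of the state $\omega_{f}$ buys existence of a boundary measure with minimal input, but, as you yourself note, it surrenders uniqueness (the state space of $C^{*}\left(G\right)$ is not a simplex) and does not identify the integrand; the ``remedy'' you then propose --- switching to the Plancherel/direct-integral picture and taking $\mu_{f}$ to be the spectral measure of the cyclic vector --- \emph{is} the paper's entire argument, so the Choquet detour is dispensable rather than wrong. Your sharpest observation, that $\lambda_{\pi}\left(\varphi^{*}\ast\varphi\right)=\left\Vert \pi\left(\varphi\right)v_{\pi}\right\Vert ^{2}$ differs from $\left|\lambda_{\pi}\left(\varphi\right)\right|^{2}$ as soon as $\dim\mathscr{H}_{\pi}>1$, so that \eqref{eq:u4} as literally written is automatic only in the abelian (one-dimensional) reduction, is a genuine point that the paper's terse ``and the result follows'' does not address; you should keep that caveat explicit, since it is the one step in (b) that requires real work in the non-abelian setting.
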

\begin{proof}
First a caution, the set $P\left(G\right)$ may not in general be
a Borel set, but by a theorem of Phelphs \cite{MR0435818}, the measure
$\mu$ may be chosen on a Borel set $B$ such that 
\begin{equation}
\mu\left(B\,\Delta\,P\left(G\right)\right)=0\label{eq:u5}
\end{equation}
where $\Delta$ denotes ``symmetric difference.''

Other than this point, the present proof follows closely that of Section
\ref{sec:para} (in the Abelian case).

We introduce $\mathscr{H}_{f}$ as the completion of the functions
$\varphi\ast f$ (convolution) for $\varphi\in C_{c}\left(G\right)$:
\[
\left\Vert \varphi\ast f\right\Vert _{\mathscr{H}_{f}}^{2}=\int_{G}\left(\varphi\ast\varphi^{*}\right)\left(x\right)f\left(x\right)dx,
\]
see (\ref{eq:u1})-(\ref{eq:u1}); with $dx$ denotes Haar measure.
As before, we get a unitary representation $U$ of $G$ acting in
$\mathscr{H}_{f}$ via 
\[
U_{x}\left(\varphi\ast f\right)=\varphi\left(x^{-1}\cdot\right)\ast f,
\]
and $\left(\left\{ U_{x}\right\} _{x\in G},\mathscr{H}_{f}\right)$
then decomposes as per the Plancherel theorem for $G$. Hence there
exists a unique $\mu$ on $P\left(G\right)$ such that 
\[
U=\int_{P\left(G\right)}\pi\,d\mu\left(\lambda_{\pi}\right),
\]
and the result follows. 
\end{proof}
\begin{acknowledgement*}
The co-authors thank the following colleagues for helpful and enlightening
discussions: Professors Daniel Alpay, Sergii Bezuglyi, Ilwoo Cho,
A. Jaffe, Paul Muhly, K.-H. Neeb, G. Olafsson, Wayne Polyzou, Myung-Sin
Song, and members in the Math Physics seminar at The University of
Iowa. 
\end{acknowledgement*}
\bibliographystyle{amsalpha}
\bibliography{ref}

\providecommand{\bysame}{\leavevmode\hbox to3em{\hrulefill}\thinspace}
\providecommand{\MR}{\relax\ifhmode\unskip\space\fi MR }
\providecommand{\MRhref}[2]{%
  \href{http://www.ams.org/mathscinet-getitem?mr=#1}{#2}
}
\providecommand{\href}[2]{#2}
\begin{thebibliography}{Sch64b}

\bibitem[AJ12]{MR2966130}
Daniel Alpay and Palle E.~T. Jorgensen, \emph{Stochastic processes induced by
  singular operators}, Numer. Funct. Anal. Optim. \textbf{33} (2012), no.~7-9,
  708--735. \MR{2966130}

\bibitem[AJ15]{MR3402823}
Daniel Alpay and Palle Jorgensen, \emph{Spectral theory for {G}aussian
  processes: reproducing kernels, boundaries, and {$L^2$}-wavelet generators
  with fractional scales}, Numer. Funct. Anal. Optim. \textbf{36} (2015),
  no.~10, 1239--1285. \MR{3402823}

\bibitem[AJL11]{MR2793121}
Daniel Alpay, Palle Jorgensen, and David Levanony, \emph{A class of {G}aussian
  processes with fractional spectral measures}, J. Funct. Anal. \textbf{261}
  (2011), no.~2, 507--541. \MR{2793121}

\bibitem[Aro50]{MR0051437}
N.~Aronszajn, \emph{Theory of reproducing kernels}, Trans. Amer. Math. Soc.
  \textbf{68} (1950), 337--404. \MR{0051437}

\bibitem[BJV16]{MR3456185}
Maria~Alice Bertolim, Alain Jacquemard, and Gioia Vago, \emph{Integration of a
  {D}irac comb and the {B}ernoulli polynomials}, Bull. Sci. Math. \textbf{140}
  (2016), no.~2, 119--139. \MR{3456185}

\bibitem[GP16]{MR3494188}
Bertrand~G. Giraud and Robi Peschanski, \emph{From ``{D}irac combs'' to
  {F}ourier-positivity}, Acta Phys. Polon. B \textbf{47} (2016), no.~4,
  1075--1100. \MR{3494188}

\bibitem[Jor86]{MR863534}
Palle E.~T. Jorgensen, \emph{Analytic continuation of local representations of
  {L}ie groups}, Pacific J. Math. \textbf{125} (1986), no.~2, 397--408.
  \MR{863534 (88m:22030)}

\bibitem[Jor87]{MR874059}
\bysame, \emph{Analytic continuation of local representations of symmetric
  spaces}, J. Funct. Anal. \textbf{70} (1987), no.~2, 304--322. \MR{874059}

\bibitem[JP93]{MR1215311}
Palle E.~T. Jorgensen and Steen Pedersen, \emph{Harmonic analysis of fractal
  measures induced by representations of a certain {$C^*$}-algebra}, Bull.
  Amer. Math. Soc. (N.S.) \textbf{29} (1993), no.~2, 228--234. \MR{1215311}

\bibitem[JP98]{MR1655831}
\bysame, \emph{Dense analytic subspaces in fractal {$L^2$}-spaces}, J. Anal.
  Math. \textbf{75} (1998), 185--228. \MR{1655831}

\bibitem[JPT16]{MR3559001}
Palle Jorgensen, Steen Pedersen, and Feng Tian, \emph{Extensions of positive
  definite functions}, Lecture Notes in Mathematics, vol. 2160, Springer,
  [Cham], 2016, Applications and their harmonic analysis. \MR{3559001}

\bibitem[KL13]{MR3004461}
Johannes Kellendonk and Daniel Lenz, \emph{Equicontinuous {D}elone dynamical
  systems}, Canad. J. Math. \textbf{65} (2013), no.~1, 149--170. \MR{3004461}

\bibitem[Mac76]{MR0396826}
George~W. Mackey, \emph{The theory of unitary group representations},
  University of Chicago Press, Chicago, Ill.-London, 1976, Based on notes by
  James M. G. Fell and David B. Lowdenslager of lectures given at the
  University of Chicago, Chicago, Ill., 1955, Chicago Lectures in Mathematics.
  \MR{0396826}

\bibitem[Mac89]{MR1043174}
\bysame, \emph{Unitary group representations in physics, probability, and
  number theory}, second ed., Advanced Book Classics, Addison-Wesley Publishing
  Company, Advanced Book Program, Redwood City, CA, 1989. \MR{1043174}

\bibitem[Mac92]{MR1187300}
\bysame, \emph{Harmonic analysis and unitary group representations: the
  development from 1927 to 1950}, L'\'emergence de l'analyse harmonique
  abstraite (1930--1950) ({P}aris, 1991), Cahiers S\'em. Hist. Math. S\'er. 2,
  vol.~2, Univ. Paris VI, Paris, 1992, pp.~13--42. \MR{1187300}

\bibitem[Phe77]{MR0435818}
R.~R. Phelps, \emph{The {C}hoquet representation in the complex case}, Bull.
  Amer. Math. Soc. \textbf{83} (1977), no.~3, 299--312. \MR{0435818}

\bibitem[Sch64a]{MR0185423}
L.~Schwartz, \emph{Sous-espaces hilbertiens et noyaux associ\'es; applications
  aux repr\'esentations des groupes de {L}ie}, Deuxi\`eme {C}olloq. l'{A}nal.
  {F}onct, Centre Belge Recherches Math., Librairie Universitaire, Louvain,
  1964, pp.~153--163. \MR{0185423}

\bibitem[Sch64b]{MR0179587}
Laurent Schwartz, \emph{Sous-espaces hilbertiens d'espaces vectoriels
  topologiques et noyaux associ\'es (noyaux reproduisants)}, J. Analyse Math.
  \textbf{13} (1964), 115--256. \MR{0179587}

\bibitem[Seg50]{MR0036765}
I.~E. Segal, \emph{An extension of {P}lancherel's formula to separable
  unimodular groups}, Ann. of Math. (2) \textbf{52} (1950), 272--292.
  \MR{0036765}

\bibitem[Sto32]{MR1503079}
M.~H. Stone, \emph{On one-parameter unitary groups in {H}ilbert space}, Ann. of
  Math. (2) \textbf{33} (1932), no.~3, 643--648. \MR{1503079}

\bibitem[SZ07]{MR2327597}
Steve Smale and Ding-Xuan Zhou, \emph{Learning theory estimates via integral
  operators and their approximations}, Constr. Approx. \textbf{26} (2007),
  no.~2, 153--172. \MR{2327597}

\bibitem[SZ09]{MR2558684}
\bysame, \emph{Geometry on probability spaces}, Constr. Approx. \textbf{30}
  (2009), no.~3, 311--323. \MR{2558684}

\end{thebibliography}

\end{document}